\documentclass[preprint,12pt,a4paper]{elsarticle}

\usepackage[T1]{fontenc}
\usepackage{lmodern}
\usepackage{amsmath,amssymb,amsthm,mathtools}
\usepackage{microtype}
\usepackage{etoolbox}
\usepackage{hyperref}

\biboptions{sort&compress}

\hypersetup
{
colorlinks=true,
linkcolor=blue,
filecolor=blue,
urlcolor=blue,
citecolor=cyan,
}

\newtheorem{theorem}{Theorem}[section]
\newtheorem{lemma}[theorem]{Lemma}
\newtheorem{proposition}[theorem]{Proposition}

\DeclareMathOperator{\diam}{diam}

\makeatletter
\renewenvironment{abstract}{%
  \global\setbox\absbox=\vbox\bgroup
  \hsize=\textwidth
  \noindent\hbox to \textwidth{\hfil\textbf{\@elsarticleabstitle}\hfil}%
  \par\medskip\noindent\hspace*{\parindent}\ignorespaces
}{\egroup}
\patchcmd{\pprintMaketitle}{\hrule\vskip12pt}{\vskip12pt}{}{}
\patchcmd{\pprintMaketitle}{\hrule\vskip12pt}{\vskip12pt}{}{}
\makeatother

\begin{document}

\begin{frontmatter}

\title{Clique-saturating non-edges throughout the Tur\'an range}

\author[fzu]{Xiaolin Wang}
\ead{xiaolinw@fzu.edu.cn}

\author[nju]{Jiabao Yang\corref{cor1}}
\ead{jbyang1215@nju.edu.cn}

\author[nju]{Ruilin Zheng}
\ead{rlzhengmath@163.com}

\cortext[cor1]{Corresponding author.}
\address[fzu]{School of Mathematics and Statistics, Fuzhou University, Fuzhou 350108, China}
\address[nju]{School of Mathematics, Nanjing University, Nanjing 210093, China}

\begin{abstract}
For an $F$-free graph $G$, a non-edge is $F$-saturating if adding it to $G$ creates a copy of $F$. We denote by $f_{p+1}(n,m)$ the minimum number of $K_{p+1}$-saturating non-edges in a  $K_{p+1}$-free $n$-vertex graph with $m$ edges.  
Erd\H{o}s and Tuza conjectured that $f_4\left(n,\mathrm{ex}(n,K_3)+ 1\right)= (1 + o(1)) \frac{n^2}{16}$.
Balogh and Liu (JCTB, 2014) disproved this conjecture and determined the asymptotic value of  $f_4(n,\mathrm{ex}(n,K_3)+1)$. He, Ma, Ma and Ye (JCTB, 2023) later determined  $f_{p+1}(n,\mathrm{ex}(n,K_p)+1)$  asymptotically for every $p\ge 3$, and asked for the value of $f_{p+1}(n,m)$ for all $\mathrm{ex}(n,K_p)+1\le m\le \mathrm{ex}(n,K_{p+1})$ and every $p\ge 3$. In this paper, we  answer their question asymptotically for all $\mathrm{ex}(n,K_p)+1\le m\le \mathrm{ex}(n,K_{p+1})$ and  every $p\ge 3$. We also determine the exact value of $f_3(n,m)$ for all $0\le m\le \mathrm{ex}(n,K_3)$ by a different method. 
\end{abstract}

\begin{keyword}
Clique-saturating non-edges \sep Tur\'an number  \sep extremal graph
\end{keyword}

\end{frontmatter}

\section{Introduction}

All graphs considered in this paper are finite and simple. A graph is
$F$-free if it contains no copy of $F$. The classical Tur\'an number
$\mathrm{ex}(n,F)$ denotes the maximum number of edges in an
$n$-vertex $F$-free graph. Mantel~\cite{Mantel} determined
$\mathrm{ex}(n,K_3)$. Tur\'an's theorem~\cite{Turan} states that, for
every integer $p\ge2$, the unique $n$-vertex $K_{p+1}$-free graph
with $\mathrm{ex}(n,K_{p+1})$ edges is the complete balanced
$p$-partite graph $T_p(n)$. We write
\(
t_p(n)=e(T_p(n)).
\)

Let $G$ be an $F$-free graph. A non-edge $xy$ of $G$ is called
$F$-saturating if $G+xy$ contains a copy of $F$. The graph $G$ is
$F$-saturated if every non-edge of $G$ is $F$-saturating. The
saturation number $\operatorname{sat}(n,F)$ is the minimum number of
edges in an $n$-vertex $F$-saturated graph.

We write $f_F(G)$ for the number of $F$-saturating non-edges of $G$.
Furthermore, $f_F(n,m)$ denotes the minimum value of $f_F(G)$ over
all $n$-vertex $F$-free graphs with $m$ edges. When $F=K_r$, we write
$f_r(G)$ and $f_r(n,m)$ in place of $f_{K_r}(G)$ and
$f_{K_r}(n,m)$, respectively.

The parameter $f_F(n,m)$ is closely related to the classical Tur\'an and  saturation problems.  Mantel's theorem~\cite{Mantel} and Tur\'an's theorem~\cite{Turan} determine the largest number of edges in a graph avoiding a prescribed clique, whereas Zykov~\cite{Zykov} and, independently, Erd\H{o}s, Hajnal, and Moon~\cite{EHM} determined the minimum number of edges in an $n$-vertex $K_r$-saturated graph. The function $f_F(n,m)$ refines these extremal questions by fixing the number of edges and asking how many non-edges are already forced to complete a copy of $F$.

In this paper, we study the value of $f_{p+1}(n,m)$ for $p\ge 2$, that is,   $F$ is a clique.
For $p\ge3$, it is easy to check that every spanning subgraph of $T_{p-1}(n)$ has no $K_{p+1}$-saturating non-edge. Hence $f_{p+1}(n,m)=0$ whenever $m\le t_{p-1}(n)$. A surprising result of Erd\H{o}s and Tuza 
\cite{ErdosProblems} showed that 
$f_4(n,t_2(n)+1)\ge cn^2$ for some absolute constant $c>0$. Thus increasing the edge count from \(t_2(n)\) to
\(t_2(n)+1\) causes \(f_4(n,m)\) to jump from \(0\) to
\(\Omega(n^2)\).
Erd\H{o}s and Tuza 
\cite{ErdosProblems} also conjectured that
\(f_4(n,t_2(n)+1)=\left(\frac1{16}+o(1)\right)n^2.
\)

In 2014, Balogh and Liu~\cite{BaloghLiu} disproved this conjecture and established the following theorem.

\begin{theorem}\label{BL}(Balogh and Liu~\cite{BaloghLiu})
\(
f_4(n,t_2(n)+1)=\left(\frac2{33}+o(1)\right)n^2.
\)
\end{theorem}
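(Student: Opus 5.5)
We prove the two bounds separately: a construction for the upper bound, and a stability argument for the lower bound.

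\textbf{Upper bound.} We look for a $K_4$-free graph with $t_2(n)+1$ edges and few $K_4$-saturating non-edges. The natural template is a near-bipartite graph with parts $A$ and $B$. A small vertex set $C$ of linear size $\gamma n$ carries a triangle-free structure: inside $C$ we place a blow-up of $C_5$, or a small bipartite piece joined to both sides. Triangles are then confined to a controlled region, and the edges lost in $A$--$B$ are compensated by edges inside $C$.

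A non-edge $xy$ is $K_4$-saturating exactly when $N(x)\cap N(y)$ contains an edge. We will optimize the part sizes and the density parameters of the blow-up so that the total count of such non-edges is $(2/33+o(1))n^2$. The constant $33=3\cdot 11$ suggests a structure with part sizes proportional to numbers such as $n/11$ and $3n/11$. We will check $K_4$-freeness directly and count the saturating pairs by cases according to which parts $x,y$ lie in.

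\textbf{Lower bound.} Let $G$ be $K_4$-free with $t_2(n)+1$ edges, and suppose $f_4(G)\le (2/33)n^2$. Since $G$ has more than $t_2(n)$ edges, it contains a triangle. Moreover, by supersaturation and a short argument, every edge lies in few triangles unless the configuration is expensive.

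The key steps are as follows.
\begin{enumerate}
\item Pick a triangle $xyz$, or better, use the fact that $G$ has many triangles or a vertex of high degree in triangles.
\item Observe that for any triangle $uvw$, each pair among the neighbourhoods forms a saturating pair. Concretely, if $a\in N(u)\cap N(v)$, $b\in N(u)\cap N(v)$, and $ab\notin E$, then $ab$ is saturating because $uv$ is an edge in their common neighbourhood. Since $G$ is $K_4$-free, $N(u)\cap N(v)$ is independent, so all pairs inside it are saturating. This gives $f_4(G)\ge \sum$ over a suitable family of edges $uv$ of $\binom{d(u,v)}{2}$, where $d(u,v)=|N(u)\cap N(v)|$, with overlaps to be controlled.
\item Use the edge count to force large codegrees. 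By $\sum_{uv\in E} d(u,v)=3T$ and the Moon--Moser/Lovász--Simonovits type bound, we have $T\ge \Omega(n)$ triangles, with structure near the extremal configuration.
\item Apply stability: $G$ is $o(n^2)$-close to $T_2(n)$, or else $f_4(G)$ is already at least $cn^2$ with $c>2/33$, which we show via a cleaning argument.
\end{enumerate}

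In the near-bipartite case we reduce to an optimization. Each vertex has a ``wrong'' neighbourhood on its own side, and these are the edges that create triangles. Saturating non-edges are counted by the pairs of vertices having adjacent common neighbours. We then solve the resulting continuous optimization, a weighted quadratic program over the relative sizes of the parts of the local structure, and show its minimum is $2/33$.

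\textbf{Main obstacle.} We expect the hardest step to be this final optimization together with the overlap control: showing that the various saturating pairs generated by different edges are not double-counted, and that no cleverer configuration beats $2/33$. Our first attempt will be to reduce to a weighted graph on a bounded number of vertex classes via regularity, and then use symmetrization to show the optimal weighted structure is the one from the construction.
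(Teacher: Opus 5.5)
Your proposal is a plan rather than a proof, and both halves have genuine gaps. (The paper obtains this statement as the case $p=3$ of Theorem~\ref{thm:clique-main}, where $D_3=11$, $x_3(0)=\Phi_3(0)=2/33$ and $\lambda=O(n^{-2})$.)

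\textbf{Upper bound.} You never write down a graph, and the template you describe cannot work as stated. Suppose $|C|=\gamma n$, the rest is bipartite, and the only extra edges lie inside a triangle-free $C$. Then there are at most $\bigl((1-\gamma)^2+\gamma^2\bigr)n^2/4<n^2/4$ edges. So the compensation must come from edges between $C$ and the two sides. The guessed proportions $n/11$, $3n/11$ are also not the right ones. What is needed is the $p=3$ instance of Lemma~\ref{lem:blowup}. Take independent sets $V_0,V_1,V_2,U_1,U_2$ of sizes about $\frac{2}{33}n,\frac{8}{33}n,\frac{8}{33}n,\frac{15}{66}n,\frac{15}{66}n$. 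Make $V_1\cup U_1$ complete to $V_2\cup U_2$, and $V_0$ complete to $V_1\cup V_2$. This graph is $K_4$-free and has $n^2/4+O(n)$ edges. Its saturating non-edges are exactly the pairs inside $V_0$, $V_1$, $V_2$, which gives $\frac12\bigl((\frac{2}{33})^2+2(\frac{8}{33})^2\bigr)n^2+O(n)=\frac{2}{33}n^2+O(n)$. To hit exactly $t_2(n)+1$ edges, move $O(1)$ vertices from $U_i$ to $V_i$ and then delete edges; deleting edges never creates saturating non-edges.

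\textbf{Lower bound.} The plan breaks at Step 4. The dichotomy ``$G$ is $o(n^2)$-close to $T_2(n)$, or $f_4(G)\ge cn^2$ with $c>2/33$'' is false. The extremal graph above is neither: every bipartition of it leaves about $|V_0||V_1|=\frac{16}{1089}n^2$ edges inside a side. In the same family, letting $|V_0|/n\to0$ while keeping $t_2(n)+1$ edges gives $(\frac1{16}+o(1))n^2$. That is exactly the Erd\H{o}s--Tuza value that Balogh and Liu refuted, so the near-bipartite regime you plan to optimize over is the wrong one.

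Regularity and symmetrization cannot rescue this. $f_4$ can change by $\Theta(n^2)$ under $o(n^2)$ edge edits. Also, the difference between $t_2(n)$ edges (answer $0$) and $t_2(n)+1$ edges is invisible in a reduced graph. Likewise, the $\Omega(n)$ triangles in Step 3 are far too weak to force quadratically many pairs.

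Your Step 2 observation, that pairs inside $N(u)\cap N(v)$ are saturating, is the right local fact. What is missing is a way to use Mantel's bound exactly while controlling overlaps. The paper does this as follows.
\begin{itemize}
\item It takes a maximum triangle packing $\mathcal R$ with $|\mathcal R|=rn$, chosen to maximize $e(H)$ for $H=G-V(\mathcal R)$. Mantel on the triangle-free $H$ gives $e(V(\mathcal R),H)\ge(\frac32r-\frac{21}4r^2)n^2-O(1)$.
\item Every later vertex with two neighbours in a packed triangle forms a saturating pair with the triangle's third vertex. Processing the triangles in order gives $(\frac r2-\frac34r^2)n^2-O(n)$ distinct such pairs.
\item The switching Lemma~\ref{lem:empty-class} yields a triangle $R^*$ with at least average degree into $H$ and one of its three common-neighbourhood classes in $H$ empty. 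Its $zn$ two-neighbour vertices, with $z\ge\frac12-\frac94r-o(1)$, therefore lie in two independent sets. They give $\frac{z^2}{4}n^2-O(n)$ further pairs inside $H$.
\item Minimizing $\frac r2-\frac34r^2+\frac14(\frac12-\frac94r)_+^2$ gives exactly $\frac2{33}$, at $r=\frac2{33}$.
\end{itemize}
The empty class is essential: with three classes, convexity only gives $z^2/6$, and the same optimization bottoms out at $\frac1{24}$.
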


More precisely, they proved that $f_4(n,t_2(n)+t)=2n^2/33+O(n)$ for every integer $1\le t\le n/66$. They also conjectured that $f_{p+1}(n,t_{p-1}(n)+1)=\left(\frac{2(p-2)^2}{p(4p^2-11p+8)}+o(1)\right)n^2$ for every $p\ge 3$, which was later confirmed by 
He, Ma, Ma, and Ye~\cite{HMMY}.

\begin{theorem}\label{HMMY}(He, Ma, Ma, and Ye~\cite{HMMY}) For every $p\ge 3$,
$f_{p+1}(n,t_{p-1}(n)+1)=\left(\frac{2(p-2)^2}{p(4p^2-11p+8)}+o(1)\right)n^2.$
\end{theorem}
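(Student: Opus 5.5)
The plan is to prove the two bounds separately. The upper bound comes from an explicit blow-up construction with optimized part sizes. The lower bound reduces an arbitrary extremal graph, via regularity and symmetrization, to a weighted optimization over blow-ups of a bounded pattern graph. Throughout write $c_p=\frac{2(p-2)^2}{p(4p^2-11p+8)}$, and note that $c_3=2/33$, matching Theorem~\ref{BL}.

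\emph{Upper bound.} We generalize the Balogh--Liu construction. Take $p-2$ classes $V_1,\dots,V_{p-2}$ of size $\alpha n$, pairwise complete to each other, and a remaining set $W$ of size $(1-(p-2)\alpha)n$. All of $W$ is joined completely to every $V_i$. Inside $W$ we place a bipartite-like structure $X\cup Y$ together with one extra edge, or a small dense gadget $Z$. This creates copies of $K_p$, but every $K_{p+1}$-saturating non-edge must lie between a few prescribed classes. Next we delete $\Theta(n)$ edges between $Z$ and the $V_i$'s, or between $X$ and $Y$, so that the graph stays $K_{p+1}$-free and has exactly $t_{p-1}(n)+1$ edges. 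The saturating non-edges are then exactly the pairs inside certain classes with a common $K_{p-1}$ in their joint neighbourhood, and their number is a quadratic form in the class proportions. Optimizing this form under the edge-count constraint should give $c_p n^2+O(n)$. This step is only computation once the right template is fixed; the template can be guessed from the $p=3$ case by adding $p-3$ universal-like classes.

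\emph{Lower bound.} Let $G$ be $K_{p+1}$-free with $t_{p-1}(n)+1$ edges. Suppose for contradiction that $G$ has fewer than $(c_p-\varepsilon)n^2$ saturating non-edges.
1. By Tur\'an's theorem, $G$ contains a $K_p$. Supersaturation then gives $\Omega(n^{p})$ copies of $K_p$ unless $G$ is close to $T_{p-1}(n)$. If $G$ is close to $T_{p-1}(n)$, a direct argument in the style of Erd\H{o}s--Tuza applies: the extra edge inside a part, together with the common neighbourhoods in the other parts, forces many saturating pairs. We need to check that this stable case gives a count well above $c_p n^2$.
2. In the non-stable case, apply the regularity lemma and pass to a weighted reduced graph $R$. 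A non-edge $xy$ of $G$ is saturating whenever $x,y$ have a $K_{p-1}$ in their common neighbourhood. In $R$ this becomes a weighted count: pairs of clusters (or single clusters) whose common neighbourhood in $R$ contains a $K_{p-1}$, weighted by the density of non-edges between them. This count is bounded above by the assumption, and the edge density of $G$ exceeds $1-\frac1{p-1}$.
3. Apply Zykov symmetrization to the weighted problem. Replacing a vertex class by a copy of a non-adjacent class does not increase the objective, up to the linear terms we track. This reduces the problem to weighted blow-ups of a bounded-size $K_{p+1}$-free pattern graph $H$ with each part internally empty or complete.

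\emph{Main obstacle.} The hardest step is the last one: solving the resulting finite optimization. We must show that over all admissible patterns $H$ and weights, the minimum saturating density subject to edge density $\ge 1-\frac1{p-1}$ is exactly $c_p$, attained by the construction above. We would first use the $K_{p+1}$-freeness of $H$ and the presence of a $K_p$ to show that $H$ contains $p-2$ "universal" classes. Then, by induction on $p$, remove these classes: the link of $p-2$ universal classes of total weight $\beta$ reduces to the $p=3$ problem, rescaled by $(1-\beta)^2$. The remaining one-variable optimization in $\beta$ should produce the rational function $c_p$. If the universal-class claim fails, the fallback is a Lagrangian/KKT analysis directly on small patterns, in the manner of Balogh--Liu.
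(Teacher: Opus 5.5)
Both halves of your proposal have genuine gaps, and they share a cause: the structure you build everything on ($p-2$, or $p-3$, classes complete to everything else) is not the extremal structure and cannot reach $c_p$.

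\emph{Upper bound.} In your template $V_1,\dots,V_{p-2}$ are complete to each other and to $W$. Even after $\Theta(n)$ deletions, $W$ is therefore triangle-free outside $O(1)$ vertices. Every pair inside a $V_i$ is saturating. So is every non-adjacent pair of $W$ with a common neighbour $z\in W$: take $z$ and one vertex of each $V_i$. Since $s(q)=t(q-1)\ge t(q)-q/2$, Theorem~\ref{thm:triangle-main} gives at least $e(W)-O(n)$ pairs of the second kind. Write $\sum_i|V_i|=un$ and use $e(G)>t_{p-1}(n)$. You then get at least
\[
\Bigl(\frac{pu^2}{2(p-2)}-u+\frac{p-2}{2(p-1)}\Bigr)n^2-O(n)\;\ge\;\frac{p-2}{2p(p-1)}\,n^2-O(n)
\]
saturating non-edges, and $\frac{p-2}{2p(p-1)}=\frac{4p^2-11p+8}{4(p-1)(p-2)}\,c_p>c_p$. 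For $p=3$ this is $n^2/12$, not $2n^2/33$, so your template is not the Balogh--Liu graph.

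The optimal graph (Lemma~\ref{lem:blowup} with $x=x_p(0)=c_p$ and $y=y_p(0)$) has no universal class. The sets $V_i\cup U_i$ ($1\le i\le p-1$) form a complete $(p-1)$-partite graph, and a class $V_0$ of size $c_pn$ is joined only to $V_1\cup\dots\cup V_{p-1}$, where $|V_i|=\frac{p-2}{(p-1)^2}\bigl(1-\frac{c_p}{2}\bigr)n$. The saturating pairs are exactly those inside $V_0$ and inside the $V_i$, giving $\frac12\bigl(x^2+\frac{y^2}{p-1}\bigr)n^2=c_pn^2$.

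\emph{Lower bound.} Your symmetrization step is unjustified. Replacing a class $u$ by a twin of a non-adjacent class $w$ changes the edge count and the saturating count by unrelated amounts. You must keep $e(G)>t_{p-1}(n)$ while not increasing the count, and the class of larger degree may well lie in more saturating pairs. Zykov's argument runs on a single linear objective such as $\#\mathrm{sat}-\mu\, e$, but that only detects the lower convex envelope of $m\mapsto f_{p+1}(n,m)$. Since $f_{p+1}(n,t_{p-1}(n))=0$, this envelope is $O(1)$ at $m=t_{p-1}(n)+1$, so it cannot see the jump you are trying to prove.

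More seriously, your plan for the main obstacle is refuted by the extremal graph itself. Its pattern has no universal class. A reduction to patterns with $p-2$ universal classes would, by the computation above, give $f_{p+1}\ge\frac{p-2}{2p(p-1)}n^2-O(n)>c_pn^2$, contradicting the construction. Moreover, the link of $p-2$ universal classes is triangle-free, i.e.\ the $p=2$ problem rather than $p=3$. Even $p-3$ universal classes over an optimal $K_4$-free link stay above $c_p$: my rough numerical estimate for $p=4$ is about $0.0757n^2$ against $n^2/14$. So the lower bound is essentially missing.

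The paper uses a regularity-free packing argument instead. Take a maximum $K_p$-packing $\mathcal R$ that maximizes $e(G-V(\mathcal R))$. The switching Lemma~\ref{lem:empty-class} yields a clique $R^*$ with many edges to $H=G-V(\mathcal R)$ and one empty class $A_i(R^*)$. Hence the vertices of $H$ with $p-1$ neighbours in $R^*$ fall into only $p-1$ independent classes, all of whose internal pairs are saturating. Counting saturating non-edges meeting $V(\mathcal R)$ and those inside $H$ gives $f_{p+1}(G)\ge F_\alpha(|\mathcal R|/n)\,n^2-O(n)$, with $F_\alpha$ as in \eqref{eq:Falpha}. Minimizing over $r=|\mathcal R|/n$ (Lemma~\ref{lem:optimization}) gives $\Phi_p(\lambda)=c_p+O(n^{-2})$ at $m=t_{p-1}(n)+1$. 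Small $r$ is covered by the same formula, so your stable/non-stable split, whose stable case you only assert, is unnecessary.
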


He, Ma, Ma, and Ye~\cite{HMMY} also  asked for the value of $f_{p+1}(n,m)$ throughout the interval between $t_{p-1}(n)+1$ and $t_p(n)$. Our first main theorem answers this question.
Fix $p\ge3$. For $t_{p-1}(n)<m\le t_p(n)$, define
\begin{equation}\label{eq:lambda}
\lambda=\frac{m-t_{p-1}(n)}{t_p(n)-t_{p-1}(n)}.
\end{equation}
Then $0<\lambda\le1$. Let $D_p=4p^2-11p+8$, and let $x_p(\lambda)$ be the unique positive root of
\begin{equation}\label{eq:quartic}
p^3D_px^4-2p^2(p-2)^2x^3-2p(p-2)\lambda x-\lambda^2=0.
\end{equation}
Define
\begin{equation}\label{eq:profile}
\begin{aligned}
y_p(\lambda)&=\frac{\lambda}{2p(p-1)x_p(\lambda)}+\frac{p-2}{p-1}\left(1-\frac{x_p(\lambda)}2\right),\\
\Phi_p(\lambda)&=\frac12\left(x_p(\lambda)^2+\frac{y_p(\lambda)^2}{p-1}\right).
\end{aligned}
\end{equation}

\begin{theorem}\label{thm:clique-main}
For every fixed $p\ge3$, $f_{p+1}(n,m)=\Phi_p(\lambda)n^2+O_p(n)$ for all  $t_{p-1}(n)<m\le t_p(n)$.
\end{theorem}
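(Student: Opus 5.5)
The plan is to prove matching upper and lower bounds, $f_{p+1}(n,m)\le \Phi_p(\lambda)n^2+O_p(n)$ and $f_{p+1}(n,m)\ge \Phi_p(\lambda)n^2-O_p(n)$, with the quartic \eqref{eq:quartic} arising as the first-order optimality condition of a two-parameter optimization. The shape of $\Phi_p$ suggests the extremal family. The term $\frac12x^2n^2$ should count the non-edges inside one independent set $X$ of size $xn$, all of them saturating. The term $\frac{y^2}{2(p-1)}n^2=(p-1)\cdot\frac12\bigl(\tfrac{yn}{p-1}\bigr)^2$ should count the non-edges inside $p-1$ sets $Y_1,\dots,Y_{p-1}$ of size $\frac{yn}{p-1}$ each, one inside each part $V_i$ of an essentially complete $(p-1)$-partite graph on $V\setminus X$.

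\textbf{Upper bound.} I would define a graph $G(x,y)$ as follows.
\begin{itemize}
\item Take $V\setminus X=V_1\cup\dots\cup V_{p-1}$ balanced and complete $(p-1)$-partite.
\item Fix $Y_i\subseteq V_i$ of size $\frac{yn}{p-1}$.
\item Join each vertex of $X$ to all of $V_i\setminus Y_i$ for every $i$, and add a suitable bipartite "extra" structure between $X$ and the $Y_i$, sized so that the total edge count equals $m$.
\end{itemize}
This generalizes the Balogh--Liu / He--Ma--Ma--Ye constructions underlying Theorem~\ref{BL} and Theorem~\ref{HMMY}. The construction must satisfy three conditions:
\begin{enumerate}
\item it is $K_{p+1}$-free;
\item the saturating non-edges are, up to $O(n)$, exactly the pairs inside $X$ and inside each $Y_i$;
\item the edge count $e(G(x,y))$ is a quadratic in $(x,y)$ that, after subtracting $t_{p-1}(n)$ and normalizing, gives the constraint $\lambda$ as a function of $x$ and $y$.
\end{enumerate}
Solving the constraint for $y$ gives exactly the formula for $y_p(\lambda)$ in \eqref{eq:profile}. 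Minimizing $\frac12(x^2+y^2/(p-1))$ along this curve, and clearing denominators in $d\Phi/dx=0$, should produce \eqref{eq:quartic}. Rounding part sizes to integers costs $O_p(n)$. As sanity checks, I would verify that $\lambda\to0$ recovers the constant of Theorem~\ref{HMMY}, and that $\lambda=1$ gives $T_p(n)$ with $\Phi_p(1)$ equal to the number of non-edges of $T_p(n)$ over $n^2$.

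\textbf{Lower bound.} Let $G$ be $K_{p+1}$-free with $m$ edges minimizing the number of saturating non-edges. The key steps, in order, are as follows.
\begin{enumerate}
\item Show that $G$ contains a $K_p$. Every non-edge inside the common neighbourhood structure of a $K_{p-1}$ whose common neighbourhood meets two vertices is saturating.
\item Use the $K_{p+1}$-freeness plus the edge surplus $m-t_{p-1}(n)=\lambda(t_p(n)-t_{p-1}(n))$ to find a partition $V=X\cup V_1\cup\dots\cup V_{p-1}$ with structure approximating the construction. I would obtain this via a stability-type argument: since $f_{p+1}(G)=O(n^2)$ is small relative to $\binom n2$, $G$ is close to $(p-1)$- or $p$-partite. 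For this step I would try a Zykov-type symmetrization adapted so as not to increase saturating pairs, or else a weighted/regularity reduction.
\item Define $X$ as the vertices lying in many $K_p$'s "transversal" to the structure, and $Y_i\subseteq V_i$ as the vertices having a neighbour in $X$ together with a full $(p-2)$-clique elsewhere. Show that all pairs inside $X$ and inside each $Y_i$ are saturating.
\item Bound $e(G)$ above by the same quadratic expression in $x=|X|/n$ and $y=\sum|Y_i|/n$ (using Cauchy--Schwarz to handle unbalanced $Y_i$, which is where the factor $\frac1{p-1}$ comes from).
\item Conclude by the same one-variable optimization as in the upper bound.
\end{enumerate}

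\textbf{Main obstacle.} I expect the main difficulty to be the structural step (step~2) with only an $O(n)$ error rather than $o(n^2)$. Exact-order error forbids a pure regularity argument. I would first try a direct vertex-by-vertex cleaning argument, in the style of Balogh--Liu: repeatedly move or delete low-degree and misplaced vertices, and show that each misplaced vertex forces $\Omega(n)$ extra saturating pairs. This yields an exact quadratic inequality up to linear error terms, uniformly in $\lambda\in(0,1]$. Uniformity near $\lambda\to0$ needs separate care, since there $x,y\to$ the HMMY values and the quartic degenerates mildly.
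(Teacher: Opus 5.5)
\textbf{The lower bound has a genuine gap.} Steps 2--4 name possible techniques (stability, a symmetrization that does not increase saturating pairs, regularity, vertex cleaning), but you carry none of them out, and the concrete claims they rely on fail as stated.

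\emph{Stability.} $\Theta(n^2)$ saturating non-edges is the same order as $\binom n2$, so this does not force $G$ to be close to $(p-1)$- or $p$-partite. You would need a stability theorem for this problem, which you do not have. You also give no operation that preserves $e(G)$ and $K_{p+1}$-freeness without increasing $f_{p+1}$.

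\emph{Step 3.} Vertices of $V_i$ that have \emph{some} neighbour in $X$ and a $(p-2)$-clique elsewhere need not pairwise share a common $K_{p-1}$. So pairs inside your $Y_i$ need not be saturating.

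\emph{Step 4.} Bounding $e(G)$ by $\frac a2(1-x)^2+xy$, with $a=\frac{p-2}{p-1}$, is the whole difficulty restated, not an argument.

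\emph{The missing idea} is to work relative to fixed cliques. This makes saturation automatic and needs no structure theorem.
\begin{itemize}
\item Take a maximum $K_p$-packing $\mathcal R$, with $e(G-V(\mathcal R))$ maximal among such packings. Put $r=|\mathcal R|/n$ and $\alpha=\frac{\lambda}{2p(p-1)}$. Then $H=G-V(\mathcal R)$ is $K_p$-free. Tur\'an's theorem applied to $H$ and to $G[V(\mathcal R)]$ forces many $V(\mathcal R)$--$H$ edges.
\item A vertex with $p-1$ neighbours in a packing clique $R$ forms a saturating non-edge with the remaining vertex of $R$. Processing the cliques in turn gives $\bigl(\alpha+ar-\frac{pa}{2}r^2\bigr)n^2-O_p(n)$ saturating pairs meeting $V(\mathcal R)$.
\item For pairs inside $H$, take $R^*=\{v_1,\dots,v_p\}$ with above-average $e(R^*,H)$. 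The sets $A_j(R^*)$ of vertices of $H$ adjacent to all of $R^*\setminus\{v_j\}$ are independent, and every pair inside one of them is saturating.
\item A switching argument based on the secondary maximality of $e(H)$ lets you assume one $A_j(R^*)$ is empty. The at least $(\alpha/r+a-br)n-O_p(1)$ such vertices, where $b=\frac{p(2p-3)}{2(p-1)}$, then lie in only $p-1$ classes. Convexity gives the second term with its factor $\frac{1}{2(p-1)}$.
\item Where that term is active, the resulting one-variable bound equals $\frac12\bigl(r^2+y(r)^2/(p-1)\bigr)$ with $y(r)=\alpha/r+a-\frac{a}{2}r$. This is exactly the construction's value along its edge constraint. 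Elsewhere the bound is nondecreasing in $r$.
\item So minimizing over $r$ gives $\Phi_p(\lambda)$ with $O_p(n)$ loss, uniformly in $\lambda$. The positive root of the quartic is simple for every $\lambda\in[0,1]$, so nothing degenerates as $\lambda\to 0$.
\end{itemize}

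\textbf{Your upper-bound construction is inverted as written.} If $X$ is complete to $V_i\setminus Y_i$, then every pair inside $V_i\setminus Y_i$ is saturating: a vertex of $X$ together with one vertex of each $V_j\setminus Y_j$, $j\ne i$, is a common $K_{p-1}$. Extra $X$--$Y_i$ edges make still more pairs inside $V_i$ saturating, so your condition 2 fails.

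You want $X$ complete to $Y_i$ and anticomplete to $V_i\setminus Y_i$, with no extra structure. Then:
\begin{itemize}
\item the edge density is $\frac a2(1-x)^2+xy$, and solving the constraint for $y$ gives exactly $y_p(\lambda)$;
\item you must check $0<y\le 1-x$ so that the part sizes make sense;
\item the exact edge count $m$ is reached by building the graph for a slightly larger $\lambda$ and deleting edges, which never creates new saturating non-edges.
\end{itemize}
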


For the upper bound in Theorem~\ref{thm:clique-main}, we use a family of blow-ups extending the constructions in~\cite{BaloghLiu,HMMY}. For the lower bound, we choose a maximum collection $\mathcal R$ of vertex-disjoint copies of $K_p$ whose deletion leaves as many edges as possible. A switching argument produces a distinguished $K_p$ for which one relevant common-neighborhood class is empty. 
We divide the saturating non-edges into those incident with $V(\mathcal R)$ and those contained in $V(G)\setminus V(\mathcal R)$. Estimating these two types reduces the proof to a one-variable optimization.

The one-sided endpoint values of the profile agree with the known extremal cases. At $\lambda=0$, a simple calculation shows that 
\[
x_p(0)=\frac{2(p-2)^2}{p(4p^2-11p+8)}\quad\text{and}\quad \Phi_p(0)=x_p(0),
\]
so Theorem \ref{thm:clique-main} recovers the leading term in Theorem \ref{HMMY} whenever $m-t_{p-1}(n)$ is positive and equals $o(n^2)$. Here $\Phi_p(0)$ is the right-hand limit above the threshold; at $m=t_{p-1}(n)$ itself, the minimum equals zero. At $\lambda=1$, one has $x_p(1)=1/p$, $y_p(1)=(p-1)/p$, and $\Phi_p(1)=1/(2p)$. Hence
\[
f_{p+1}(n,t_p(n))=\binom n2-t_p(n)=\frac{n^2}{2p}+O_p(n).
\]

 For $f_3(n,m)$,  since $t_1(n)+1=1$, it is easy to see that $f_3(n,t_1(n)+1)=0$. So $f_3(n,m)$ must behave quite differently from  $f_{p+1}(n,m)$ for $p\ge 3$. Our second result determines the exact value of $f_3(n,m)$.   In a triangle-free graph, a non-edge is triangle-saturating exactly when its endpoints have a common neighbor. Thus $f_3(G)$ is the number of unordered pairs of vertices at distance two. Furthermore,  when $m\le\lfloor n/2\rfloor$, there is an $n$-vertex graph with $m$ edges whose edge set is a matching. Then  $f_3(n,m)=0$. When $m>\lfloor n/2\rfloor$, at least one non-edge is $K_3$-saturating in each $n$-vertex triangle-free graph with $m$ edges. Hence $f_3(n,m)=0$ exactly when $m\le\lfloor n/2\rfloor$.

 For every positive integer $q$, let $t(q)=\lfloor q^2/4\rfloor$ and $s(q)=\lfloor(q-1)^2/4\rfloor$. It is easy to calculate that $T_2(q)$ has  exactly $s(q)$ triangle-saturating non-edges.

\begin{theorem}\label{thm:triangle-main}
For all integers $n\ge1$ and $0\le m\le t(n)$,
\begin{equation}\label{eq:triangle-formula}
f_3(n,m)=\min\left\{\sum_{i=1}^k s(q_i): k,q_1,\ldots,q_k\in\mathbb N,\ \sum_{i=1}^k q_i=n,\ \sum_{i=1}^k t(q_i)\ge m\right\}.
\end{equation}
\end{theorem}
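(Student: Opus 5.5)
The upper bound comes from a disjoint union of balanced bipartite graphs. Given $q_1,\dots,q_k$ with $\sum q_i=n$ and $\sum t(q_i)\ge m$, take $G_0=\bigcup_i T_2(q_i)$. It is triangle-free, and its distance-two pairs are exactly the pairs inside a common part of some $T_2(q_i)$, so $f_3(G_0)=\sum s(q_i)$. Delete $\sum t(q_i)-m$ edges to reach exactly $m$ edges. Deleting edges keeps the graph triangle-free. It can only destroy paths of length two, and every new non-edge created by a deletion joins two vertices in different parts of the same component. Such vertices have no common neighbour, because all their neighbours lie in opposite parts. Hence the set of distance-two pairs can only shrink, and $f_3(n,m)$ is at most the right-hand side of \eqref{eq:triangle-formula}.

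For the lower bound I argue by strong induction on $n$, proving that every triangle-free $G$ with $n$ vertices and $m$ edges has $f_3(G)$ at least the minimum in \eqref{eq:triangle-formula}. Call that minimum $g(n,m)$; note that $g$ is nondecreasing in $m$. If $G$ is disconnected with components on $n_1,\dots,n_r$ vertices and $m_1,\dots,m_r$ edges, then $f_3(G)=\sum f_3(G_j)$. Concatenating the optimal partitions for the components gives $g(n,m)\le\sum g(n_j,m_j)$, so induction applies. So assume $G$ is connected. If $G$ is bipartite with parts $A,B$, $|A|=a$, $|B|=b$, I want to show $f_3(G)\ge s(n)$ whenever $m>$ (what smaller pieces can achieve), or more simply compare with a single block or a split.

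The main idea for the connected case is a degree-sum count. In a triangle-free graph, the number of pairs at distance two is at least $\frac{1}{?}$... Concretely, I would use the following identity. For each vertex $v$, the pairs inside $N(v)$ are non-adjacent and at distance two. Each distance-two pair $\{x,y\}$ is counted $|N(x)\cap N(y)|$ times, so this gives $\sum_v\binom{d(v)}{2}$ only with multiplicity, which is not directly usable. Instead, I would fix a vertex $v$ of maximum degree $\Delta$. Then $N(v)$ is independent, with $\binom{\Delta}{2}$ distance-two pairs inside it. Every vertex at distance two from $v$, of which there are $d_2(v)$, contributes a pair with $v$. Removing $v$ and recursing gives the inequality
\[
f_3(G)\ \ge\ \binom{\Delta}{2}+d_2(v)+f_3(G-v)-(\text{pairs whose only common neighbour is }v).
\]
This correction term is exactly $\binom{\Delta}{2}$ minus the pairs in $N(v)$ having another common neighbour. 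That inequality is too weak, so a cleaner route is needed.

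The cleaner route is a symmetrization or compression argument. The plan is to show that some extremal $G$ is a disjoint union of complete bipartite graphs plus possibly a deficient one, and then that the components can be balanced. Step one is Zykov-type symmetrization: if $x,y$ are non-adjacent, replacing $x$ by a clone of $y$ keeps the graph triangle-free. I need to track how $f_3$ and $e$ change. Among graphs with at least $m$ edges one may then reduce to complete multipartite components, which are complete bipartite since the graph is triangle-free, with edges removed at the end. Step two optimizes within $K_{a,b}$ versus $T_2(a+b)$: the balanced split maximizes edges while $\binom a2+\binom b2$ is minimized at balance as well, so the blocks are balanced.

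The main obstacle is step one. Symmetrization changes the edge count and the saturating count simultaneously. I would try weighting by a Lagrange-type comparison: clone the vertex with the better ratio of edges gained to distance-two pairs gained, exploiting that the change is linear in the choice.
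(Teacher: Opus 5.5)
Your upper bound is correct and is the paper's construction, and the reduction from a disconnected graph to its components is fine. The gap is the connected case, which carries the entire lower bound. You never establish any inequality for a connected triangle-free graph, and the symmetrization you fall back on breaks down at exactly the step you flag. Let $u(v)$ be the number of pairs in $N(v)$ whose only common neighbour is $v$. For $x,y$ at distance two, replacing $x$ by a twin of $y$ changes $(e,f_3)$ by $\bigl(d(y)-d(x),\,|N_2(y)|-|N_2(x)|-u(x)\bigr)$, and the reverse move changes it by $\bigl(d(x)-d(y),\,|N_2(x)|-|N_2(y)|-u(y)\bigr)$. The two $f_3$-changes sum to $-u(x)-u(y)\le 0$, but the edge changes have opposite signs. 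So no move is guaranteed both to keep $e\ge m$ and not to increase $f_3$. A Lagrange-type rule only makes $f_3-\mu e$ non-increasing, and an argument whose sole invariant has this form can certify at most the lower convex envelope of $m\mapsto f_3(n,m)$. This function is not convex: $f_3(4,2)=0$ while $f_3(4,3)=f_3(4,4)=2$. Unrestricted Zykov symmetrization is worse still. It terminates in a single complete bipartite graph on all $n$ vertices, which is already wrong for $n\ge 3$ and $1\le m\le\lfloor n/2\rfloor$, where a matching gives $f_3=0$.

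The missing idea is the paper's componentwise bound: a connected triangle-free $G$ with $q\ge 2$ vertices and $e$ edges satisfies $f_3(G)\ge\min\{e-1,s(q)\}$. Together with $s(r+1)=t(r)$, this gives some $r\le q$ with $t(r)\ge e$ and $s(r)\le f_3(G)$. Each component can then be traded for one part of size $r$ and $q-r$ parts of size one in the partition. The paper proves the bound by induction from the vertex-deletion identity you discarded, $f_3(G)=f_3(G-v)+|N_2(v)|+u(v)$. In your display the subtracted term should be the number of pairs in $N(v)$ having a second common neighbour, not $u(v)$; as written the inequality is false, for instance on $C_4$. The paper combines this identity with the gluing formula $f_3(G)=f_3(G_1)+f_3(G_2)+d_{G_1}(x)d_{G_2}(x)$ at a cut vertex $x$. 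It also uses a strengthened three-case hypothesis: $f_3(G)\ge e$ if $G$ is not bipartite; $f_3(G)\ge e-1$ if $G$ is bipartite of diameter at least four; and $f_3(G)=\binom a2+\binom b2\ge s(q)$ if $G$ is bipartite of diameter at most three. In the $2$-connected case one deletes a vertex $v$ (of minimum degree, or a neighbour of one) such that $G-v$ is connected and $|N_2(v)|+u(v)\ge d(v)$. The identity looked too weak only because you applied it at a maximum-degree vertex without an inductive statement of this shape.
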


The upper bound in Theorem~\ref{thm:triangle-main} is obtained
from a disjoint union of balanced complete bipartite graphs,
followed by deleting edges if necessary, while the lower bound follows from a componentwise estimate for connected triangle-free graphs.

In particular, \(f_3(n,t(n))=s(n)\). Indeed, by the equality
case of Mantel's theorem, every \(n\)-vertex triangle-free graph
with \(t(n)\) edges is isomorphic to \(T_2(n)\). Its
triangle-saturating non-edges are precisely the pairs contained
in one of its two parts, and their number is \(s(n)\).

Since the right-hand side of \eqref{eq:triangle-formula} is the value of a finite optimization problem, it can be computed by the following
dynamic program. Let $F(n,m)$ denote the right-hand side of \eqref{eq:triangle-formula}. For nonnegative integers $n,m$, we extend $F(n,m)$ by allowing the empty partition when $n=0$ with $F(0,0)=0$, and use $F(n,m)=+\infty$ when no admissible partition exists.

\begin{theorem}\label{cor:triangle-dp}
For $n\ge1$ and $0\le m\le t(n)$,
\begin{equation}\label{eq:triangle-dp}
F(n,m)=\min_{1\le q\le n}\left\{s(q)+F\bigl(n-q,\max\{0,m-t(q)\}\bigr)\right\}.
\end{equation}
\end{theorem}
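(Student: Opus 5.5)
The recursion is a standard ``peel off one part'' decomposition of the finite optimization defining $F(n,m)$. I would prove the two inequalities separately. Throughout I use the extended convention: $F(0,0)=0$, $F(0,m)=+\infty$ for $m>0$, and for $n'\ge 1$, $F(n',m')$ is the minimum over all partitions $q_1+\dots+q_k=n'$ with $\sum t(q_i)\ge m'$ (possibly $+\infty$). For every $n'\ge0$ I also allow any $m'\ge 0$, not only $m'\le t(n')$. This is harmless: when $m'>t(n')$ no admissible partition exists, because $t$ is superadditive. Indeed $\lfloor a^2/4\rfloor+\lfloor b^2/4\rfloor\le\lfloor (a+b)^2/4\rfloor$, since $a^2/4+b^2/4\le (a+b)^2/4$ and the right side is an upper bound for an integer.

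\emph{Step 1 ($\ge$).} Take an optimal partition $q_1,\dots,q_k$ for $F(n,m)$, which exists since the partition $k=1$, $q_1=n$ is admissible when $m\le t(n)$. Set $q=q_1\in[1,n]$. The remaining parts $q_2,\dots,q_k$ partition $n-q$ (the empty partition if $k=1$, so $q=n$). They satisfy $\sum_{i\ge2}t(q_i)\ge m-t(q)$, and also $\ge0$, hence $\ge\max\{0,m-t(q)\}$. So they are admissible for $F(n-q,\max\{0,m-t(q)\})$. When $n-q=0$ we need $\max\{0,m-t(n)\}=0$, which holds since $m\le t(n)$. Therefore $F(n,m)=s(q)+\sum_{i\ge2}s(q_i)\ge s(q)+F(n-q,\max\{0,m-t(q)\})$, which is at least the right-hand side of \eqref{eq:triangle-dp}.

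\emph{Step 2 ($\le$).} Fix $q\in[1,n]$ with $F(n-q,m')<\infty$, where $m'=\max\{0,m-t(q)\}$. Take an optimal partition of $n-q$ for this value; if $n=q$ it is the empty partition, which is valid since then $m'=0$. Prepend $q$ to it. The result is a partition of $n$ whose $t$-sum is at least $t(q)+m'\ge m$. It is therefore admissible for $F(n,m)$, giving $F(n,m)\le s(q)+F(n-q,m')$. Minimizing over $q$ gives the reverse inequality. Terms equal to $+\infty$ do not affect the minimum, and some term is finite: $q=n$ gives $s(n)+F(0,0)$.

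\emph{Main obstacle.} There is little real difficulty. The only care needed is with the boundary conventions: the empty partition, the value $+\infty$, and the truncation $\max\{0,\cdot\}$ when $t(q)\ge m$. The superadditivity remark in the first paragraph handles arguments $m'>t(n')$ that may appear in recursive calls.
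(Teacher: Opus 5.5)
Your proof is correct and is essentially the paper's argument: you distinguish one part $q$ of an optimal partition to get $F(n,m)\ge$ the right-hand side, and you adjoin a part of size $q$ to an optimal partition of $n-q$ for the reverse inequality. Your extra care with the empty partition and the value $+\infty$ makes explicit the conventions the paper fixes before the theorem; the superadditivity remark is harmless but not needed, since the extended definition already sets $F(n',m')=+\infty$ whenever no admissible partition exists.
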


\begin{proof} 
In an admissible partition of $n$, distinguish a part of size $q$. The remaining parts have total order $n-q$ and total $t$-value at least $\max\{0,m-t(q)\}$. This proves that the left-hand side of \eqref{eq:triangle-dp} is at least the right-hand side. Conversely, adjoin a part of size $q$ to a partition attaining the corresponding value of $F(n-q,\max\{0,m-t(q)\})$. The resulting partition is admissible for $F(n,m)$, which proves the reverse inequality.
\end{proof}

At the end of this section, we use the following notation throughout the paper. For a vertex $v$ in a graph $G$, let $N_G(v)$ and $d_G(v)$ denote its neighborhood and degree, respectively. 
For $X\subseteq V(G)$, let $G[X]$ be the subgraph induced by $X$ and let $G-X=G[V(G)\setminus X]$. For disjoint vertex sets $A$ and $B$, let $e(A,B)$ be the number of edges between them. We use the same notation for disjoint subgraphs, identifying each subgraph with its vertex set.

The rest of this paper is organized as follows. 
Sections~\ref{sec:2} and \ref{sec:3} prove the upper bound and the lower bound of Theorem \ref{thm:clique-main}, respectively.
In Section~\ref{sec:4}, we prove  Theorem~\ref{thm:triangle-main}.

\section{The upper bound in Theorem~\ref{thm:clique-main}}\label{sec:2}

Throughout Sections~\ref{sec:2} and~\ref{sec:3}, fix an integer
$p\ge3$. Let $x=x_p(\lambda)$, $y=y_p(\lambda)$, and $\Phi_p(\lambda)=\Phi$ be defined by \eqref{eq:quartic} and \eqref{eq:profile}.

\begin{lemma}\label{lem:profile-parameters}
For every $\lambda\in[0,1]$, one has $\sqrt{\lambda}/p\le x\le 1/p$ and $0<y\le1-x$. Moreover, $x,y,\Phi$ are continuously differentiable on $[0,1]$.
\end{lemma}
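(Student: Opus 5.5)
The plan is to treat the quartic \eqref{eq:quartic} as a polynomial in $x$ with parameter $\lambda$. We locate its positive root by sign changes and by evaluating it at the two claimed endpoints. Smoothness then comes from the implicit function theorem. Write
\[
P_\lambda(x)=ax^4-bx^3-cx-\lambda^2,\qquad a=p^3D_p,\quad b=2p^2(p-2)^2,\quad c=2p(p-2)\lambda .
\]
For $p\ge3$ we have $a,b>0$ and $c\ge0$.

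\textbf{Existence and uniqueness of the root.} For $\lambda>0$ the coefficient sequence is $+,-,0,-,-$, which has exactly one sign change. By Descartes' rule there is exactly one positive root. Moreover $P_\lambda(0)=-\lambda^2<0$ and $P_\lambda\to+\infty$, so $P_\lambda<0$ to the left of the root and $P_\lambda>0$ to the right of it. For $\lambda=0$ we have $P_0(x)=x^3(ax-b)$, whose unique positive root is $b/a=2(p-2)^2/(pD_p)$, with the same sign pattern.

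\textbf{The bounds on $x$.} These follow from two evaluations. First, using $D_p-2(p-2)^2=p(2p-3)$,
\[
P_\lambda(1/p)=2p-3-2(p-2)\lambda-\lambda^2\ge 2p-3-2(p-2)-1=0 \quad\text{for } \lambda\le1,
\]
so $x\le1/p$. Second, using $D_p-p=4(p-1)(p-2)$, a direct computation gives
\[
P_\lambda\!\left(\tfrac{\sqrt\lambda}{p}\right)=\frac{\lambda^{3/2}}{p}\Bigl((D_p-p)\sqrt\lambda-2(p-2)^2-2p(p-2)\Bigr)=\frac{4(p-1)(p-2)\lambda^{3/2}}{p}\bigl(\sqrt\lambda-1\bigr)\le0,
\]
so $x\ge\sqrt\lambda/p$.

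\textbf{The bounds on $y$.} Since $0<x\le1/p<2$, both summands in the definition of $y$ are nonnegative and the second is strictly positive, so $y>0$. For the upper bound, the inequality $y\le1-x$ rearranges to
\[
\frac{\lambda}{2p(p-1)x}\le\frac{1}{p-1}\Bigl(1-\frac{px}{2}\Bigr),\quad\text{that is,}\quad \lambda\le u(2-u)\ \text{ with } u=px .
\]
We know $u\in[\sqrt\lambda,1]$, and $u(2-u)$ is increasing on $[0,1]$. Hence $u(2-u)\ge\sqrt\lambda(2-\sqrt\lambda)$, and this is at least $\lambda$ because $\sqrt\lambda(2-\sqrt\lambda)-\lambda=2\sqrt\lambda(1-\sqrt\lambda)\ge0$.

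\textbf{Smoothness.} This is the only step needing care, namely checking that $\partial_xP_\lambda\ne0$ at the root, including at $\lambda=0$. At the root $x>0$, and $P_\lambda(x)=0$ gives
\[
xP_\lambda'(x)=xP_\lambda'(x)-4P_\lambda(x)=bx^3+3cx+4\lambda^2\ge bx^3>0 .
\]
The polynomial $P_\lambda(x)$ is smooth in $(\lambda,x)$ on an open neighbourhood of $[0,1]\times(0,\infty)$. By the implicit function theorem, together with uniqueness of the positive root (which glues the local branches), $\lambda\mapsto x_p(\lambda)$ is $C^1$ on $[0,1]$. Since $x\ge x_p(0)$ is impossible to guarantee directly, we argue instead that $x$ is continuous and positive on the compact set $[0,1]$, so it is bounded away from $0$. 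The formulas \eqref{eq:profile} then show that $y$ and $\Phi$ are also $C^1$ on $[0,1]$.
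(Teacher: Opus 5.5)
Your proof is correct and follows the paper's route: Descartes' rule of signs for the unique positive root, the evaluations $P_\lambda(1/p)\ge0$ and $P_\lambda(\sqrt\lambda/p)\le0$ for the bounds on $x$, and (in your notation) the identity $xP_\lambda'(x)=bx^3+3cx+4\lambda^2>0$ to apply the implicit function theorem. The only variation is in proving $y\le1-x$. You reduce it to $\lambda\le u(2-u)$ with $u=px$ and evaluate at $u=\sqrt\lambda$, whereas the paper shows $x+y$ is increasing in $x$ on $[\sqrt\lambda/p,\infty)$ and evaluates at $x=1/p$; these are equivalent rearrangements. Your aside about $x\ge x_p(0)$ is unnecessary, since positivity of $x$ at each point already makes $y$ and $\Phi$ of class $C^1$.
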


\begin{proof}
Let $$Q_\lambda(X)=p^3D_pX^4-2p^2(p-2)^2X^3-2p(p-2)\lambda X-\lambda^2$$ be the left-hand side of \eqref{eq:quartic}. When $\lambda=0$, its unique positive root $x$ is $2(p-2)^2/(pD_p)$, and we have $\sqrt{\lambda}/p=0< x\le 1/p$. When $\lambda>0$, since $D_p=(p-2)(4p-3)+2>0$, its nonzero coefficients have one sign change. By descartes’ rule of signs, $Q_\lambda(0)<0$, and $Q_\lambda(X)>0$ for all sufficiently large $X$, we obtain $Q_\lambda$ has exactly one positive root. Since $Q_\lambda(1/p)=(1-\lambda)(\lambda+2p-3)\ge0$ and $Q_\lambda\!\left(\frac{\sqrt{\lambda}}{p}\right)
=\frac{4(p-1)(p-2)}{p}\,\lambda^2\left(1-\frac1{\sqrt{\lambda}}\right)\le0$,
we have $\sqrt{\lambda}/p\le x\le1/p$ as well.

Since $x>0$, $\lambda\ge0$, and $p\ge3$, the definition of $y$ implies that $y>0$.
By \eqref{eq:profile}, we have $$x+y=\frac{p-2}{p-1}+\frac{px}{2(p-1)}+\frac{\lambda}{2p(p-1)x}.$$ 
Let $f(X)=\frac{p-2}{p-1}+\frac{pX}{2(p-1)}+\frac{\lambda}{2p(p-1)X}$. 
For $X\ge\sqrt{\lambda}/p$, we have 
$$f'(X)=\frac{p}{2(p-1)}-\frac{\lambda}{2p(p-1)X^2}\ge0,$$ 
and hence
$$x+y=f(x)\le f(1/p)
=\frac{p-2}{p-1}+\frac{1}{2(p-1)}+\frac{\lambda}{2(p-1)}
=\frac{2p-3+\lambda}{2(p-1)}\le1.$$

Finally, from  \eqref{eq:quartic},
$xQ'_\lambda(x)=2p^2(p-2)^2x^3+6p(p-2)\lambda x+4\lambda^2>0$,
so the root is simple for every $\lambda\in[0,1]$. The implicit function theorem shows that $x$ is continuously differentiable. The same conclusion then follows for $y$ and $\Phi$.
\end{proof}

Consider a graph $G$ whose vertex classes are
$V_0,V_1,\ldots,V_{p-1},U_1,\ldots,U_{p-1}$.
For distinct $i,j\in\{1,\ldots,p-1\}$, include every edge between $V_i\cup U_i$ and $V_j\cup U_j$, and include every edge between $V_0$ and $V_i$. There are no other edges.

\begin{lemma}\label{lem:blowup}
The graph above is $K_{p+1}$-free. If $V_0,V_1,\ldots,V_{p-1}$ are nonempty, its $K_{p+1}$-saturating non-edges are exactly the pairs within $V_0$ and the pairs within the classes $V_i$. If
$|V_0|=xn+O_p(1)$, $|V_i|=yn/(p-1)+O_p(1)$, and $|U_i|=(1-x-y)n/(p-1)+O_p(1)$, then
\begin{align}
 e(G)&=\left(\frac{p-2}{2(p-1)}(1-x)^2+xy\right)n^2+O_p(n),\label{eq:blowup-edges}\\
 f_{p+1}(G)&=\frac12\left(x^2+\frac{y^2}{p-1}\right)n^2+O_p(n).\label{eq:blowup-saturation}
\end{align}
\end{lemma}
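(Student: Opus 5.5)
The plan is to treat the graph through its quotient structure. Put $W_i=V_i\cup U_i$ for $1\le i\le p-1$. Every edge of $G$ either joins two distinct blocks $W_i,W_j$, or joins $V_0$ to some $V_i$.

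\emph{$K_{p+1}$-freeness.} A clique meets each $W_i$ in at most one vertex, since $W_i$ is independent. It also meets $V_0$ in at most one vertex. So a clique has at most $p$ vertices. If it has exactly $p$, it contains a vertex $v_0\in V_0$ and one vertex from each $W_i$. Since $N(v_0)=V_1\cup\cdots\cup V_{p-1}$, those vertices lie in the $V_i$. Hence $G$ has no $K_{p+1}$, and every $K_p$ has this form or lies inside $W_1\cup\cdots\cup W_{p-1}$ (which only holds $(p-1)$-cliques). Thus the $K_p$'s of $G$ are exactly the transversals $\{v_0,v_1,\ldots,v_{p-1}\}$ with $v_0\in V_0$ and $v_i\in V_i$.

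\emph{Identifying saturating non-edges.} A non-edge $uv$ is saturating iff $u,v$ have a common $K_{p-1}$ in their common neighbourhood. The non-edges fall into four types.
\begin{itemize}
\item A pair inside $V_0$ is saturating. The common neighbourhood is $V_1\cup\cdots\cup V_{p-1}$, which contains a $K_{p-1}$ because all $V_i$ are nonempty.
\item A pair $u,u'\in V_i$ is saturating. The common neighbourhood contains $V_0$ and all $V_j$ with $j\ne i$, giving a $K_{p-1}$ of one vertex from $V_0$ plus one from each $V_j$.
\item A pair inside $W_i$ with at least one endpoint in $U_i$ is not saturating. The common neighbourhood lies in $\bigcup_{j\ne i}W_j$. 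That set has no $K_{p-1}$, as its cliques have size at most $p-2$.
\item A pair between $V_0$ and $U_i$ is not saturating. Adding it gives a clique that meets $V_0$ once and each $W_j$ at most once. A $K_{p+1}$ would need the $U_i$ vertex $w$ together with $v_0$ and vertices in every $W_j$, $j\ne i$, adjacent to $v_0$. That is $p$ vertices, so at most a $K_p$, not a $K_{p+1}$.
\end{itemize}
These are all the non-edges.

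\emph{Counting.} Edges between blocks contribute $\sum_{i<j}|W_i||W_j|$. The blocks have size $(1-x)n/(p-1)+O_p(1)$, so this equals $\binom{p-1}{2}\frac{(1-x)^2}{(p-1)^2}n^2+O_p(n)=\frac{p-2}{2(p-1)}(1-x)^2n^2+O_p(n)$. The $V_0$–$V_i$ edges add $|V_0|\sum_i|V_i|=xyn^2+O_p(n)$. This gives \eqref{eq:blowup-edges}.

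The saturating count is $\binom{|V_0|}{2}+\sum_i\binom{|V_i|}{2}=\frac{x^2}{2}n^2+(p-1)\frac{y^2n^2}{2(p-1)^2}+O_p(n)$, which is \eqref{eq:blowup-saturation}.

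The only delicate step is the classification of non-edges. The main subtlety is checking that no $K_p$ uses a $U_i$ vertex; this relies on the $p-1$ blocks $W_i$ being independent, with $V_0$ adjacent only to the $V_i$. The counting is routine.
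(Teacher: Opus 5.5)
Your proof is correct and follows essentially the same route as the paper: the same $K_{p+1}$-freeness check, the same common-neighbourhood classification of the non-edges into pairs in $V_0$, pairs in $V_i$, pairs in $W_i$ meeting $U_i$, and $V_0$--$U_i$ pairs, and the same quadratic counting. The only cosmetic difference is that you phrase freeness through the $p$ independent classes $V_0,W_1,\ldots,W_{p-1}$ rather than the paper's case split on whether a clique meets $V_0$; as in the paper, the saturation count tacitly uses that $V_0,V_1,\ldots,V_{p-1}$ are nonempty, which holds where the lemma is applied.
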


\begin{proof}
A clique avoiding $V_0$ contains at most one vertex from each class $V_i\cup U_i$, so it has at most $p-1$ vertices. A clique meeting $V_0$ avoids every $U_i$ and contains at most one vertex from each $V_i$, so it has at most $p$ vertices. Thus the graph is $K_{p+1}$-free.

A pair within $V_0$ has a common $K_{p-1}$ formed by one vertex from each $V_i$. A pair within $V_i$ has a common $K_{p-1}$ formed by one vertex from $V_0$ and one vertex from every $V_j$ with $j\ne i$. Hence these pairs are saturating.

For a pair within $U_i$, or a pair with one endpoint in $V_i$ and one in $U_i$, the common neighborhood meets only the $p-2$ classes $V_j\cup U_j$ with $j\ne i$, so its clique number is at most $p-2$. For a non-edge between $V_0$ and $U_i$, the common neighborhood is contained in the union of the $p-2$ classes $V_j$ with $j\ne i$. These pairs are not saturating, and all non-edges have now been considered.

The edges between distinct classes $V_i\cup U_i$ contribute $\frac{p-2}{2(p-1)}(1-x)^2n^2+O_p(n)$, and the edges from $V_0$ to the classes $V_i$ contribute $xyn^2+O_p(n)$. This proves \eqref{eq:blowup-edges}. The classification of saturating non-edges yields 
\[
\binom{|V_0|}{2}+\sum_{i=1}^{p-1}\binom{|V_i|}{2}=\frac12\left(x^2+\frac{y^2}{p-1}\right)n^2+O_p(n),
\]
which proves \eqref{eq:blowup-saturation}.
\end{proof}

For $x=x_p(\lambda)$ and $y=y_p(\lambda)$, the definitions in
\eqref{eq:profile} yield the following expression for the edge
density in \eqref{eq:blowup-edges}:
\begin{equation}\label{eq:edge-density}
\frac{p-2}{2(p-1)}(1-x)^2+xy
=
\frac{p-2}{2(p-1)}+\frac{\lambda}{2p(p-1)}.
\end{equation}

It is worth mentioning that if $n\equiv s\pmod r$ and $0\le s<r$, then $t_r(n)=\frac{r-1}{2r}n^2-\frac{s(r-s)}{2r}$. Consequently,
\begin{equation}\label{eq:turan-difference}
t_p(n)-t_{p-1}(n)=\frac{n^2}{2p(p-1)}+O_p(1).
\end{equation}

\begin{proposition}\label{prop:clique-upper}
For every fixed $p\ge3$ and every $t_{p-1}(n)<m\le t_p(n)$, we have $f_{p+1}(n,m)\le\Phi_p(\lambda)n^2+O_p(n)$.
\end{proposition}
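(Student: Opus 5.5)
The plan is to instantiate the blow-up of Lemma~\ref{lem:blowup} with $x=x_p(\lambda)$ and $y=y_p(\lambda)$, and then adjust the edge count to exactly $m$ by deleting a few edges. First choose integers $|V_0|=\lfloor xn\rfloor$, $|V_i|=\lfloor yn/(p-1)\rfloor$ or its ceiling, and put the remaining vertices into the classes $U_i$ as evenly as possible. Lemma~\ref{lem:profile-parameters} gives $x>0$, $y>0$ and $1-x-y\ge0$, so these sizes are nonnegative and differ from the prescribed proportions by $O_p(1)$. For $n$ larger than a constant depending on $p$, the classes $V_0,\dots,V_{p-1}$ are nonempty (for small $n$ the statement is trivial by the $O_p(n)$ term).

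By \eqref{eq:blowup-edges} and \eqref{eq:edge-density}, the resulting graph $G_0$ has
\[
e(G_0)=\Bigl(\frac{p-2}{2(p-1)}+\frac{\lambda}{2p(p-1)}\Bigr)n^2+O_p(n).
\]
On the other hand, \eqref{eq:lambda} together with \eqref{eq:turan-difference} and $t_{p-1}(n)=\frac{p-2}{2(p-1)}n^2+O_p(1)$ gives
\[
m=t_{p-1}(n)+\lambda\bigl(t_p(n)-t_{p-1}(n)\bigr)=\Bigl(\frac{p-2}{2(p-1)}+\frac{\lambda}{2p(p-1)}\Bigr)n^2+O_p(1).
\]
Hence $|e(G_0)-m|=O_p(n)$.

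The main obstacle is the case $e(G_0)<m$, since deleting edges cannot fix a deficit. I would handle it by first moving $O_p(1)$ vertices. Moving one vertex from $U_1$ into $V_1$ adds about $xn$ edges, because it gains its neighbours in $V_0$. Since $x\ge\sqrt\lambda/p$ could be tiny when $\lambda$ is small, a cleaner alternative is to perturb $\lambda$ itself. Replace $\lambda$ by $\lambda'=\min\{1,\lambda+Cn^{-1}\}$ for a large constant $C=C(p)$. The edge density then rises by $\Theta_p(n^{-1})$, which makes $e(G_0)\ge m$ for $C$ large. The edge case $\lambda'=1$ is fine, since then $x=1/p$, $y=(p-1)/p$, the $U_i$ are empty, and $G_0=T_p(n)$ up to $O_p(1)$ moves, with $e(G_0)\ge m$ after rounding toward balance. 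By Lemma~\ref{lem:profile-parameters}, $\Phi_p$ is $C^1$ on $[0,1]$ and hence Lipschitz, so $\Phi_p(\lambda')n^2=\Phi_p(\lambda)n^2+O_p(n)$.

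Once $e(G_0)\ge m$ with $e(G_0)-m=O_p(n)$, delete $e(G_0)-m$ edges to obtain $G$. Deleting edges keeps the graph $K_{p+1}$-free. The subtle point is that deletion can create new saturating non-edges, namely the deleted pairs themselves, and it can never create a clique through other pairs. Any non-edge of $G$ that is saturating in $G$ is either a non-edge of $G_0$ that was saturating in $G_0$ (a supergraph), or one of the deleted pairs. Thus
\[
f_{p+1}(G)\le f_{p+1}(G_0)+O_p(n).
\]
Combining this with \eqref{eq:blowup-saturation} gives $f_{p+1}(n,m)\le f_{p+1}(G)\le\Phi_p(\lambda)n^2+O_p(n)$.
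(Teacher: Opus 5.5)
Your proposal is correct and is essentially the paper's proof. The paper also shifts to $\lambda+C/n$ so that the rounded blow-up has at least $m$ edges, uses $T_p(n)$ when $\lambda$ is within $C/n$ of $1$, deletes edges down to $m$, and absorbs the shift via the Lipschitz property of $\Phi_p$.

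Two small corrections, neither of which affects validity. First, a deleted pair $uv$ can never be saturating in $G$, since $G+uv\subseteq G_0$ is $K_{p+1}$-free; so in fact $f_{p+1}(G)\le f_{p+1}(G_0)$ with no extra $O_p(n)$ term. Second, $x$ is not small when $\lambda$ is small, since $x_p(0)=2(p-2)^2/(pD_p)>0$; its positive minimum on $[0,1]$ is what makes the classes nonempty uniformly in $\lambda$.
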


\begin{proof}
Choose integer class sizes summing to $n$ such that each differs by $O_p(1)$ from the corresponding real quantity in Lemma~\ref{lem:blowup}. Since $x$ and $y$ are positive and continuous on $[0,1]$, each has a positive minimum on this interval. Thus the prescribed sizes of $V_0,V_1,\ldots,V_{p-1}$ are all $\Omega_p(n)$, and hence their rounded sizes are positive for all sufficiently large $n$. The formulas in Lemma~\ref{lem:blowup} express both the number of edges and the number of saturating non-edges as sums of $O_p(1)$ quadratic terms in the class sizes. Therefore, changing each class size by $O_p(1)$ changes either count by at most $O_p(n)$. Finally, Lemma~\ref{lem:profile-parameters} shows that $x$, $y$, and $\Phi$ are Lipschitz on $[0,1]$.


Let $C_0=C_0(p)$ satisfy that rounding changes the edge number by at most $C_0n$. Choose $C>2p(p-1)(C_0+1)$. 

Suppose first that $\lambda\le1-C/n$, and use the construction with $\lambda'=\lambda+C/n$. By \eqref{eq:edge-density} and \eqref{eq:turan-difference} , before rounding its edge number exceeds $m$ by $Cn/[2p(p-1)]+O_p(1)$. Thus the rounded graph still has at least $m$ edges. Delete edges until exactly $m$ remain.

If $H$ is a spanning subgraph of a $K_{p+1}$-free graph $G$, every $K_{p+1}$-saturating non-edge of $H$ is also saturating in $G$. Indeed, such a pair cannot be an edge of $G$, because then the created $K_{p+1}$ would already lie in $G$. Hence deleting edges cannot increase the number of saturating non-edges. Lemma~\ref{lem:blowup} and the Lipschitz property now yield $f_{p+1}(n,m)\le\Phi_p(\lambda')n^2+O_p(n)=\Phi_p(\lambda)n^2+O_p(n)$.

Now suppose that $\lambda>1-C/n$. Begin with $T_p(n)$ and delete edges until $m$ remain. Every non-edge of $T_p(n)$ is $K_{p+1}$-saturating. Since $\Phi_p(1)=1/(2p)$ and $|1-\lambda|=O_p(1/n)$, the Lipschitz property yields $f_{p+1}(n,m)\le\binom n2-t_p(n)=\Phi_p(\lambda)n^2+O_p(n)$.

The proof is complete.
\end{proof}

\section{The lower bound in Theorem~\ref{thm:clique-main}}\label{sec:3}

Let $G$ be an $n$-vertex $K_{p+1}$-free graph with $m>t_{p-1}(n)$ edges. For convenience, we write 
$$a=\frac{p-2}{p-1}, \quad  \eta=\frac{m-t_{p-1}(n)}{n^2}, \quad \text{and} \quad \alpha=\frac{\lambda}{2p(p-1)}.$$ 
By \eqref{eq:turan-difference}, $\eta=\alpha+O_p(n^{-2})$, while $e(G)=(a/2+\eta)n^2+O_p(1)$.

By Tur\'an's theorem, $G$ contains a copy of $K_p$. A $K_p$-packing
in $G$ is a family of pairwise vertex-disjoint copies of $K_p$.
Choose a $K_p$-packing $\mathcal R$ of maximum cardinality. Subject
to this condition, choose $\mathcal R$ so that
$e(G-V(\mathcal R))$ is as large as possible. Let
$V(\mathcal R)=\bigcup_{R\in\mathcal R}V(R)
$ and $H=G-V(\mathcal R)$.
Then $H$ is $K_p$-free.

For $R=\{v_1,\ldots,v_p\}\in\mathcal R$ and $1\le i\le p$, let $A_i(R)$ consist of the vertices $u\in V(H)$ adjacent to every vertex of $R\setminus\{v_i\}$. 
In the following, we sometimes omit the specific set of vertices in $R$ and still use the formal notation $A_i(R)$.
These $p$ classes (that is $A_1(R),\cdots, A_p(R)$) are pairwise disjoint and independent. Indeed, a vertex belonging to two distinct classes would be adjacent to every vertex of $R$, and an edge inside one class, together with $R$ minus the corresponding vertex, would form a copy of $K_{p+1}$. Every pair within one class is $K_{p+1}$-saturating, because $R\setminus\{v_i\}$ is a copy of $K_{p-1}$ in the common neighborhood of its two endpoints.

\begin{lemma}\label{lem:switching}
Let $R\in\mathcal R$, let $X\subseteq R$ and $Y\subseteq V(H)$ be cliques with $|X|=|Y|$, and assume that $R'=(R\setminus X)\cup Y$ is a copy of $K_p$. Replace $R$ by $R'$ in $\mathcal R$, and let $H'$ be the new remaining graph. Then $e(R',H')\ge e(R,H)$.
\end{lemma}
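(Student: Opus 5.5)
The plan is to compare the two packings through the edges induced on $V(R)\cup V(H)$, and then apply the tie-breaking rule used to choose $\mathcal R$.

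First, I check that $\mathcal R'=(\mathcal R\setminus\{R\})\cup\{R'\}$ is again a $K_p$-packing. Since $Y\subseteq V(H)$, the vertices of $Y$ lie in no member of $\mathcal R$. The remaining vertices of $R'$ belong to $R$, which is disjoint from the other members of $\mathcal R$. So $R'$ is vertex-disjoint from every member of $\mathcal R\setminus\{R\}$. By hypothesis $R'$ is a copy of $K_p$. Hence $\mathcal R'$ is a $K_p$-packing with $|\mathcal R'|=|\mathcal R|$, which is the maximum cardinality. By the secondary maximality in the choice of $\mathcal R$, we get
\[
e(H')=e\bigl(G-V(\mathcal R')\bigr)\le e\bigl(G-V(\mathcal R)\bigr)=e(H).
\]

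Next, let $Z=V(\mathcal R)\setminus V(R)$ be the vertex set of the other members, which is common to both packings. Then
\[
V(G)\setminus Z \;=\; V(R)\cup V(H) \;=\; V(R')\cup V(H'),
\]
where $V(H')=(V(H)\setminus Y)\cup X$. Call this set $S$. Counting the edges of $G[S]$ in the two ways given by these partitions gives
\[
e(R)+e(R,H)+e(H)=e(G[S])=e(R')+e(R',H')+e(H').
\]
Since $R$ and $R'$ are both copies of $K_p$, we have $e(R)=e(R')=\binom p2$. Therefore $e(R',H')-e(R,H)=e(H)-e(H')\ge0$, which is the claim.

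There is no real obstacle here. The only point needing care is confirming that $\mathcal R'$ is a legitimate competitor in the choice of $\mathcal R$, namely that it is disjoint and of the same size. The clique hypotheses on $X$ and $Y$ are only needed to ensure that $R'$ is a $K_p$, and this is already assumed.
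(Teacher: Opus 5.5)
Your proof is correct and follows the paper's idea: $\mathcal R'$ is again a maximum $K_p$-packing, so the tie-breaking rule gives $e(H')\le e(H)$, and an edge-conservation identity turns this into the claim. The only difference is bookkeeping. You double count $e(G[V(R)\cup V(H)])$ and use $e(R)=e(R')=\binom p2$, whereas the paper reduces both sides to $e(Y,H\setminus Y)-e(X,H\setminus Y)$ using $e(X)=e(Y)$ and the completeness of $R\setminus X$ to $X$ and to $Y$; your version is slightly more direct.
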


\begin{proof}
The new family is a maximum $K_p$-packing and $H'=(H\setminus Y)\cup X$. The choice of $\mathcal R$ implies $e(H)\ge e(H')$. Since $X$ and $Y$ are cliques of the same order, this inequality is equivalent to $e(Y,H\setminus Y)\ge e(X,H\setminus Y)$. The edges from $R\setminus X$ to $X$ and to $Y$ are complete, while every other term cancels. Therefore $e(R',H')-e(R,H)=e(Y,H\setminus Y)-e(X,H\setminus Y)\ge0$.
\end{proof}

\begin{lemma}\label{lem:empty-class}
Suppose that $R^*\in\mathcal R$ satisfies $e(R^*,H)\ge L$. There is a maximum $K_p$-packing $\mathcal Q$ and a clique $Q^*\in\mathcal Q$ such that $e(Q^*,G-V(\mathcal Q))\ge L$ and one of the classes $A_1(Q^*),\cdots, A_p(Q^*)$ is empty.
\end{lemma}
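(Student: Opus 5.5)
The plan is an iterated switching argument that uses Lemma~\ref{lem:switching}. We repeatedly replace a single vertex of the distinguished clique by a vertex of $H$, until one of the classes becomes empty. The quantity $e(Q^*,G-V(\mathcal Q))$ never drops below $L$ along the way, and an extremal choice will force the process to stop.

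\textbf{Setup.} Consider all pairs $(\mathcal Q,Q^*)$ such that $\mathcal Q$ is a maximum $K_p$-packing, $Q^*\in\mathcal Q$, and $e(Q^*,G-V(\mathcal Q))\ge L$. The pair $(\mathcal R,R^*)$ is admissible, so this family is nonempty. Among admissible pairs, first take those for which $e(G-V(\mathcal Q))$ is maximum. This keeps Lemma~\ref{lem:switching} applicable, since its proof only used that property of $\mathcal R$. Among these, choose one maximizing a secondary potential, which I would take to be $e(Q^*,G-V(\mathcal Q))$ itself. Suppose for contradiction that every class $A_i(Q^*)$ is nonempty, where $Q^*=\{v_1,\dots,v_p\}$. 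Pick $a_i\in A_i(Q^*)$ for each $i$, and write $H=G-V(\mathcal Q)$.

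\textbf{Local structure.} The following facts hold.
\begin{itemize}
\item Each $a_i$ is adjacent to $v_j$ for all $j\ne i$, and not to $v_i$, since otherwise $Q^*\cup\{a_i\}$ would be a $K_{p+1}$.
\item The $a_i$ are distinct, because the classes are disjoint.
\item $Q_i'=(Q^*\setminus\{v_i\})\cup\{a_i\}$ is a $K_p$. Swapping it in gives a new maximum packing, to which Lemma~\ref{lem:switching} applies with $X=\{v_i\}$ and $Y=\{a_i\}$. Hence $e(Q_i',H_i')\ge e(Q^*,H)\ge L$, where $H_i'=(H\setminus\{a_i\})\cup\{v_i\}$.
\end{itemize}
By the proof of Lemma~\ref{lem:switching}, the first-level maximality of $e(H)$ gives $e(a_i,H\setminus a_i)\ge e(v_i,H\setminus a_i)$. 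The new pair is again admissible.

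\textbf{Strict gain.} The difference is $\Delta_i=e(a_i,H\setminus a_i)-e(v_i,H\setminus a_i)\ge 0$, and the goal is to show that some $\Delta_i$ is strictly positive. That contradicts the maximality of the secondary potential (and, once the new pair has been swapped in, possibly also the first-level choice). For this I would sum over $i$, exploiting the following facts.
\begin{itemize}
\item $v_i$ is adjacent to all $a_j$ with $j\ne i$, while $a_i$ is not adjacent to $v_i$.
\item Each $a_j$ contributes to the degrees of $p-1$ vertices of $Q^*$.
\item If some $a_i\sim a_j$, then $(Q^*\setminus\{v_i,v_j\})\cup\{a_i,a_j\}$ is another $K_p$, which opens a two-vertex switch.
\item If the $a_i$ are pairwise non-adjacent, then $\{a_1,\dots,a_p\}$ together with $Q^*$ is a complete $p$-partite-like configuration, and a two-vertex switch $X=\{v_i,v_j\}$ can be used instead.
\end{itemize}
Alternatively, a different secondary potential may work better: the number of vertices of $H$ lying in $A$-classes, or $\sum_i|A_i|$. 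This would let each swap make strict progress, namely that $v_i$ enters $A_i(Q_i')$ while $a_i$ leaves.

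\textbf{Main obstacle.} The main obstacle is exactly this strictness and termination. Lemma~\ref{lem:switching} gives only a weak inequality, so the switches can cycle unless a monovariant is found. First I would try the lexicographic potential $\bigl(e(G-V(\mathcal Q)),\,e(Q^*,G-V(\mathcal Q)),\,-\sum_i|A_i(Q^*)|\bigr)$. I would then show that when all classes are nonempty, some one- or two-vertex switch either strictly improves the potential or empties a class. The finiteness of $G$ then yields the required $\mathcal Q$ and $Q^*$.
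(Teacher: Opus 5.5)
There is a genuine gap. The step you call the main obstacle is the whole content of the lemma, and the potential you propose cannot resolve it. If $Q'$ is obtained from $Q^*$ by a switch, then $V(Q')\cup V(H')=V(Q^*)\cup V(H)$ and $e(Q')=e(Q^*)=\binom p2$, so
\[
e(H')+e(Q',H')=e(H)+e(Q^*,H).
\]

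\begin{itemize}
\item A strict gain $\Delta_i>0$ in your second coordinate is exactly a loss of $\Delta_i$ in the first. So it never contradicts lexicographic maximality.
\item After such a switch the new packing no longer maximizes $e(G-V(\mathcal Q))$. Lemma~\ref{lem:switching} is then unavailable for a further step unless you always compare back to $\mathcal R$, which the identity above allows.
\item The third coordinate does not help either. A switch at index $i$ gives $A_i(Q_i')=(A_i(Q^*)\setminus\{a_i\})\cup\{v_i\}$, so the class you swap through never shrinks and can never become the empty one.
\item Your two-vertex switch for pairwise non-adjacent $a_i$ has no candidate $Y$, since $\{a_i,a_j\}$ is then not a clique.
\end{itemize}

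The missing idea is that the empty class must be found at a vertex of $R^*$ that was \emph{not} swapped out. Progress should be measured by how many vertices of the original $H$ the clique contains. The paper's argument runs as follows.
\begin{itemize}
\item If some $A_i(R^*)$ is empty there is nothing to prove. Otherwise take a clique $C\subseteq H$ of maximum order such that $R^*\cup C$ contains a $K_p$ containing $C$. A singleton from $A_1(R^*)$ qualifies, and $c=|C|\le p-1$ because $H$ is $K_p$-free.
\item Make a single switch to $R'=C\cup\{v_{c+1},\dots,v_p\}$. Lemma~\ref{lem:switching}, applied to the original $\mathcal R$, gives $e(R',H')\ge L$.
\item Suppose some $y\in A_p(R')$ existed. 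Then $y$ is not adjacent to $v_p$, while $v_1,\dots,v_c$ are all adjacent to $v_p$. Hence $y\in V(H)\setminus C$.
\item Now $C\cup\{y\}$ is a clique in $H$, and $C\cup\{y,v_{c+1},\dots,v_{p-1}\}$ is a $K_p$. This contradicts the maximality of $C$.
\end{itemize}
Your observation that $a_i\sim a_j$ opens a two-vertex switch is the first step of this clique-growing argument. But you need either the maximal choice of $C$, or an iteration that only ever swaps out original vertices of $R^*$ with $|V(Q^*)\cap V(H)|$ as the monovariant. Without one of these, the proposal does not prove the lemma.
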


\begin{proof}
Suppose, to the contrary, that in every maximum $K_p$-packing, each clique with at least $L$ edges to the remaining graph has all $p$ classes nonempty. 
Since every $A_i(R^*)$ is nonempty, a vertex of any one class forms a singleton clique $C\subseteq H$ such that $R^*\cup C$ contains a copy of $K_p$ containing every vertex of $C$. Hence there is a nonempty clique $C\subseteq H$ with this property. Choose such a clique $C$ of maximum order. Since $H$ is $K_p$-free, write $C=\{x_1,\ldots,x_c\}$, where $1\le c\le p-1$. Relabel $R^*=\{v_1,\ldots,v_p\}$ so that $R'=\{x_1,\ldots,x_c,v_{c+1},\ldots,v_p\}$ is a copy of $K_p$.

Let $\mathcal Q=(\mathcal R\setminus\{R^*\})\cup\{R'\}$ and $H'=G-V(\mathcal Q)$. Then $\mathcal Q$ is a maximum $K_p$-packing. By Lemma~\ref{lem:switching}, $e(R',H')\ge L$. Applying the contrary assumption to the pair $(\mathcal Q,R')$, we conclude that every class $A_i(R')$, now defined with respect to $H'$, is nonempty.

Choose $y\in A_p(R')$. Since $y$ is adjacent to every vertex of $R'\setminus\{v_p\}$ and $G$ is $K_{p+1}$-free, it is not adjacent to $v_p$. On the other hand, each of $v_1,\ldots,v_c$ is adjacent to $v_p$, so $y\notin\{v_1,\ldots,v_c\}$. Since $V(H')=(V(H)\setminus C)\cup\{v_1,\ldots,v_c\},$
it follows that $y\in V(H)\setminus C$.

The vertex $y$ is adjacent to every vertex of $C$ and to $v_{c+1},\ldots,v_{p-1}$. Hence $C\cup\{y\}$ is a clique in $H$, while $C\cup\{y,v_{c+1},\ldots,v_{p-1}\}$ is a copy of $K_p$ containing every vertex of $C\cup\{y\}$. This contradicts the maximality of $C$.
\end{proof}

For a real number $w$, let $w_+=\max\{w,0\}$. Define $b=p(2p-3)/[2(p-1)]$ and, for $0<r\le1/p$,
\begin{equation}\label{eq:Falpha}
F_\alpha(r)=\alpha+ar-\frac{pa}{2}r^2+\frac1{2(p-1)}\left(\frac\alpha r+a-br\right)_+^2.
\end{equation}

\begin{proposition}\label{prop:clique-lower-function}
Every $n$-vertex $K_{p+1}$-free graph $G$ with $m>t_{p-1}(n)$ edges satisfies $f_{p+1}(G)\ge F_\alpha(|\mathcal R|/n)n^2-O_p(n)$.
\end{proposition}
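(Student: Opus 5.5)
Throughout, write $k=|\mathcal R|$, $r=k/n$, $h=|V(H)|=n-pk=(1-pr)n$. Since $G$ is $K_{p+1}$-free, $H$ is $K_p$-free, so by Tur\'an's theorem $e(H)\le t_{p-1}(h)=\frac a2(1-pr)^2n^2+O_p(1)$. The plan is to lower-bound two disjoint families of saturating non-edges and add them: those with at least one endpoint in $V(\mathcal R)$, and pairs inside a single class $A_i(R^*)$ for one distinguished $R^*$ (these lie in $V(H)$). The target is that the first family contributes $(\alpha+ar-\frac{pa}2r^2)n^2-O_p(n)$ and the second contributes $\frac1{2(p-1)}\bigl(\frac\alpha r+a-br\bigr)_+^2n^2-O_p(n)$.

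\emph{Basic edge bookkeeping.} Fix $R=\{v_1,\dots,v_p\}\in\mathcal R$ and $u\in V(H)$. Then $u$ has at most $p-1$ neighbours in $R$, with equality exactly when $u\in A_i(R)$ for some $i$. In that case $uv_i$ is a non-edge whose common neighbourhood contains the $K_{p-1}$ given by $R\setminus\{v_i\}$, so it is saturating. Hence
\[
e(R,H)\le (p-2)h+\sum_i|A_i(R)|,
\]
and $R$ is incident with at least $\sum_i|A_i(R)|$ saturating non-edges going to $H$. Also $e(G)=e(G[V(\mathcal R)])+e(V(\mathcal R),H)+e(H)$, with $e(G)=(\frac a2+\alpha)n^2+O_p(1)$ and $e(G[V(\mathcal R)])\le\binom{pk}2$. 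These give
\[
\sum_R e(R,H)\ \ge\ \Bigl(\tfrac a2+\alpha-\tfrac{p^2r^2}{2}-\tfrac a2(1-pr)^2\Bigr)n^2-O_p(n).
\]

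\emph{First family.} I would show that the number of saturating non-edges touching $V(\mathcal R)$ is at least $(\alpha+ar-\frac{pa}2r^2)n^2-O_p(n)$. The count is the sum of three pieces:
\begin{itemize}
\item $\sum_R\sum_i|A_i(R)|$, the pairs $uv_i$ above;
\item the saturating non-edges between distinct cliques of $\mathcal R$;
\item the saturating non-edges from $V(\mathcal R)$ to vertices of $H$ with at most $p-2$ neighbours in the clique.
\end{itemize}
Substituting $\sum_i|A_i(R)|\ge e(R,H)-(p-2)h$ and summing over $R$, the edge identity converts the missing edges into a lower bound. A short calculation checks that the $r$-dependence collapses to $\alpha+ar-\frac{pa}2r^2$. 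The delicate point is that non-edges between two cliques of $\mathcal R$, or between $R$ and low-degree vertices of $H$, need not be saturating. I would handle them by bounding edges between two cliques $R,R'$ by $p^2$, and by using maximality of the packing: a vertex of $H$ whose neighbourhood in $R$ is too rich, together with other vertices, would let one enlarge $\mathcal R$. If the crude count does not close, the fallback is to keep the deficit term $(p-2)h-e(R,H)$ explicitly and absorb it into the second term via monotonicity of $(\cdot)_+^2$.

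\emph{Second family.} Choose $R^*$ with $e(R^*,H)\ge L$, where $L=\frac1k\sum_Re(R,H)$ is the average. Apply Lemma~\ref{lem:empty-class}, replacing $(\mathcal R,R^*)$ by $(\mathcal Q,Q^*)$; the new packing is still maximum and the first-family estimate only used maximality and the edge count, so it is unaffected. Then one class of $Q^*$ is empty, so
\[
s:=\sum_i|A_i(Q^*)|\ge L-(p-2)h
\]
is spread over $p-1$ independent classes, and every pair within a class is saturating. Cauchy--Schwarz gives at least $\frac{s^2}{2(p-1)}-O(n)$ such pairs. Dividing the bound for $\sum_R e(R,H)$ by $k=rn$ yields $s\ge(\frac\alpha r+a-br)n-O_p(1)$; indeed $b=p(2p-3)/[2(p-1)]$ arises as $\frac{p^2}{2}-\frac{p^2a}{2}+(p-2)p-\dots$, to be checked. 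These pairs lie inside $V(H')$, which is disjoint from the first family's pairs only after re-indexing. To avoid double counting I would take the first family to be the non-edges touching $V(\mathcal Q)$, consistently, from the start.

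The main obstacle I expect is the exact first-family estimate, namely matching the coefficient $\alpha+ar-\frac{pa}2r^2$ rather than a weaker one. That requires using packing maximality to control non-saturating non-edges between cliques and toward low-degree vertices.
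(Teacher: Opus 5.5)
\textbf{Genuine gap: the first-family estimate.} This is the step you flagged yourself, and it is not established. Counting only the pairs $uv_i$ with $u\in V(H)$ cannot reach the target. Combine $\sum_i|A_i(R)|\ge e(R,H)-(p-2)h$ with the edge identity, even using the best available bound $e(G[V(\mathcal R)])\le t_p(pk)\le\frac{p(p-1)}2r^2n^2$. The result is only $(\alpha+ar-br^2)n^2-O_p(n)$, which falls short of $(\alpha+ar-\frac{pa}2r^2)n^2$ by $\frac p2r^2n^2$. With your bound $\binom{pk}2$ the shortfall is $pr^2n^2$. So the saturating non-edges between distinct cliques of the packing are essential, and none of your proposed handles produces them:
\begin{itemize}
\item Bounding $e(R,R')$ from above by $p^2$ only feeds the same deficient count. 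It discards the information that these edges force saturating non-edges.
\item Packing maximality plays no role in that step: a vertex has at most $p-1$ neighbours in any $K_p$ simply because $G$ is $K_{p+1}$-free.
\item The non-edges to low-degree vertices of $H$ contribute nothing.
\item Absorbing the deficit into the second family cannot work either. That family concerns a single clique, while the deficit is spread over all of $\mathcal R$.
\end{itemize}

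\textbf{The missing idea.} Apply the same $(p-1)$-neighbour observation to vertices of other cliques. Order the packing as $R_1,\dots,R_k$. For $i<j$, each vertex of $R_j$ with exactly $p-1$ neighbours in $R_i$ gives a saturating non-edge to its missing vertex in $R_i$. Hence the pair $R_i,R_j$ contributes at least $e(R_i,R_j)-p(p-2)$ such non-edges, each attributed uniquely to $(i,j)$. You then never bound $e(G[V(\mathcal R)])$: the between-clique edges cancel, and using only $e(H)\le t_{p-1}(h)$ you get
\[
\ell_1\ \ge\ e(G)-e(H)-\binom{p}{2}k-(p-2)\Bigl(kn-\frac p2k(k+1)\Bigr)\ \ge\ \Bigl(\alpha+ar-\frac{pa}{2}r^2\Bigr)n^2-O_p(n).
\]
This is the paper's sequential count, where $d_i$ is the number of edges from $R_i$ to the vertices outside $R_1\cup\dots\cup R_i$.

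\textbf{A smaller error in the second family.} You kept the ``to be checked'' step open, and it fails as written. From your displayed bound, which uses $e(G[V(\mathcal R)])\le\binom{pk}2$, dividing by $k$ gives only $\sum_i|A_i(Q^*)|\ge\bigl(\frac\alpha r+a-(b+\frac p2)r\bigr)n-O_p(1)$, not the coefficient $b$. The statement's $b=p(2p-3)/[2(p-1)]$ comes out exactly only if you apply Tur\'an's theorem to the $K_{p+1}$-free graph $G[V(\mathcal R)]$, giving $e(G[V(\mathcal R)])\le\frac{p(p-1)}2r^2n^2$.

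With these two corrections, plus your consistent use of the switched packing $\mathcal Q$ for both families (as the paper does), the argument goes through.
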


\begin{proof}
For convenience, we write $s=|\mathcal R|$ and $r=s/n$. 
Since $H$ is $K_p$-free, Tur\'an's theorem yields $e(H)\le a(1-pr)^2n^2/2$. The induced graph on $V(\mathcal R)$ is $K_{p+1}$-free, so $e(G[V(\mathcal R)])\le p(p-1)r^2n^2/2$. Subtracting these bounds from $e(G)$ yields
\begin{equation}\label{eq:cross-lower}
e(V(\mathcal R),H)\ge\left(\eta+\frac{p(p-2)}{p-1}r-\frac{p(2p^2-4p+1)}{2(p-1)}r^2\right)n^2-O_p(1).
\end{equation}
Averaging \eqref{eq:cross-lower} over the $s=rn$ cliques and then applying Lemma~\ref{lem:empty-class}, we may replace the $K_p$-packing and relabel its remaining graph so that a clique $R^*$ has one empty class $A_i(R^*)$ for some $i\in\{1,\ldots,p\}$ and satisfies
\begin{equation}\label{eq:distinguished-lower}
e(R^*,H)\ge\left(\frac\eta r+\frac{p(p-2)}{p-1}-\frac{p(2p^2-4p+1)}{2(p-1)}r\right)n-O_p(1).
\end{equation}
Let $zn$ be the total number of vertices in the $A_1(R^*)\cup \cdots \cup A_p(R^*)$. Every vertex of $H$ has at most $p-1$ neighbors in $R^*$; vertices in these $p$ classes have exactly $p-1$, and every other vertex has at most $p-2$. Hence $e(R^*,H)\le(p-2)|H|+zn$. Combining this inequality with \eqref{eq:distinguished-lower}, $|H|=(1-pr)n$, and $\eta=\alpha+O_p(n^{-2})$, we obtain
\begin{equation}\label{eq:z-lower}
z\ge\frac\alpha r+a-br-O_p(n^{-1}).
\end{equation}

Let $\ell_1$ be the number of saturating non-edges incident with $V(\mathcal R)$. Order the cliques as $R_1,\ldots,R_s$, and let $d_i$ be the number of edges from $R_i$ to the vertices outside $R_1\cup\cdots\cup R_i$. Every such remaining vertex has at most $p-1$ neighbors in $R_i$, so at least $d_i-(p-2)(n-pi)$ of them have exactly $p-1$ neighbors in $R_i$. Each of these vertices forms a saturating non-edge with its unique non-neighbor in $R_i$. No pair is counted twice, because its endpoint in the first processed clique determines $i$. Therefore
\begin{equation}\label{eq:incident-lower}
\begin{aligned}
\ell_1&\ge\sum_{i=1}^s\bigl(d_i-(p-2)(n-pi)\bigr)\\
&=e(G)-e(H)-\binom p2s-(p-2)\left(sn-\frac p2s(s+1)\right)\\
&\ge\left(\alpha+ar-\frac{pa}{2}r^2\right)n^2-O_p(n).
\end{aligned}
\end{equation}

Let $\ell_2$ be the number of saturating non-edges within $H$. Since one class $A_i(R^*)$ is empty, the $zn$ vertices counted above lie in at most $p-1$ independent classes, and every pair within one class is saturating. Convexity yields $\ell_2\ge z^2n^2/[2(p-1)]-zn/2$. 
It follows from $0\le z\le1$ and \eqref{eq:z-lower} that $z^2\ge (\alpha/r+a-br)_+^2-O_p(n^{-1})$.
Hence
\begin{equation}\label{eq:remainder-lower}
\ell_2\ge\frac1{2(p-1)}\left(\frac\alpha r+a-br\right)_+^2n^2-O_p(n).
\end{equation}

The pairs counted by $\ell_1$ and $\ell_2$ are disjoint. Combining \eqref{eq:incident-lower} and \eqref{eq:remainder-lower} proves the proposition.
\end{proof}

\begin{lemma}\label{lem:optimization}
$F_\alpha(r)\ge\Phi_p(\lambda)$ for every $0<r\le1/p$.
\end{lemma}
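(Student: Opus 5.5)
The plan is to minimize $F_\alpha$ over $(0,1/p]$ directly and to show that the minimum value is exactly $\Phi_p(\lambda)$. Write $w(r)=\alpha/r+a-br$, so that
\[
F_\alpha(r)=P(r)+\frac{1}{2(p-1)}w(r)_+^2,\qquad P(r)=\alpha+ar-\frac{pa}{2}r^2 .
\]

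\textbf{Step 1: reduce to the region $w>0$.} The function $w$ is strictly decreasing on $(0,\infty)$. So the set where $w(r)\le 0$ is an interval $[r_0,1/p]$, possibly empty. On that interval $F_\alpha=P$. Since $P'(r)=a(1-pr)\ge 0$ for $r\le 1/p$, $P$ is nondecreasing there, so $F_\alpha(r)\ge P(r_0)=F_\alpha(r_0)$. The positive part vanishes continuously at $r_0$. It therefore suffices to bound
\[
G(r)=P(r)+\frac{w(r)^2}{2(p-1)}
\]
from below on the interval $(0,\min\{r_0,1/p\}]$. There $F_\alpha=G$, and $G\ge F_\alpha$ holds everywhere anyway.

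\textbf{Step 2: locate the minimizer of $G$ when $\alpha>0$.} As $r\to0^+$ we have $G(r)\to\infty$, so $G$ attains its minimum on $(0,1/p]$. This happens either at an interior critical point or at $r=1/p$. Next compute
\[
r^3G'(r)=r^3P'(r)+\frac{r^3\,w(r)w'(r)}{p-1},\qquad w'(r)=-\frac{\alpha}{r^2}-b .
\]
This is a quartic polynomial in $r$. I would clear denominators and substitute $\alpha=\lambda/(2p(p-1))$. The hope is to show that it is a positive multiple of the polynomial $Q_\lambda$ from \eqref{eq:quartic}, possibly after rescaling the variable. If so, Descartes' rule of signs (as in the proof of Lemma~\ref{lem:profile-parameters}) gives a unique positive critical point $r^*$. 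Moreover $G'<0$ before $r^*$ and $G'>0$ after it, so $r^*$ is the global minimizer.

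If the quartic is not literally $Q_\lambda$, I would fall back on a different route. The target $\Phi_p(\lambda)$ is the value of the construction in Lemma~\ref{lem:blowup}, where each packed $K_p$ uses one vertex of $V_0$. That suggests $r^*=x_p(\lambda)$ together with an identity linking $w(x)$ and $y$. I would check this by direct substitution. Note that $y=\alpha/x+a-\frac a2x$ is not equal to $w(x)$ in general. So I would first test the endpoint cases $\lambda=0$ and $\lambda=1$ numerically to determine the exact relation between $r^*$ and $x$.

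\textbf{Step 3: evaluate $G(r^*)$.} Use the critical-point equation to eliminate the highest powers of $r^*$ and $\alpha$. Then compare the result with $\Phi=\frac12\bigl(x^2+y^2/(p-1)\bigr)$, with $y$ expressed through \eqref{eq:profile}. The identity \eqref{eq:edge-density} should be useful here, since it encodes exactly the edge count that produced \eqref{eq:cross-lower}.

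\textbf{Remaining cases.} If the minimum of $G$ falls at the endpoint $1/p$, then $w(1/p)=p\alpha+a-b/p=(\lambda-1)/(2(p-1))\le 0$. Hence $r_0\le 1/p$, and Step 1 already handles this case, with value $P(r_0)$ compared against $\Phi$ by the same computation. The case $\alpha=0$ (that is, $\lambda=0$) can be done directly, since then $w=a-br$ is linear. Alternatively it follows by continuity in $\alpha$, using Lemma~\ref{lem:profile-parameters}.

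The main obstacle is the algebra in Steps 2 and 3: matching the critical-point quartic of $G$ with \eqref{eq:quartic} and confirming that the minimum value collapses to $\Phi_p(\lambda)$. I would first verify the claimed identities at $\lambda=0$, where $x_p(0)=2(p-2)^2/(pD_p)$, and at $\lambda=1$, before attempting the general symbolic reduction.
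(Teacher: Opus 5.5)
Your outline is the paper's proof. You split $(0,1/p]$ at the zero of $w$ (the paper's $g$), use $P'(r)=a(1-pr)\ge0$ on the right piece, and minimize the squared branch through its derivative. As written, though, the two computations that make up the lemma are left as a ``hope'' with fallbacks. Both go through exactly as you wanted.

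\textbf{The derivative.} One has $ww'=-\alpha^2/r^3-a\alpha/r^2-ab+b^2r$. Using $b^2-p(p-2)=pD_p/(4(p-1)^2)$ and $a(p-1-b)=-a^2/2$, this gives
\[
4(p-1)^3r^3G'(r)=pD_pr^4-2(p-2)^2r^3-4(p-1)(p-2)\alpha r-4(p-1)^2\alpha^2 .
\]
With $\alpha=\lambda/(2p(p-1))$, the right-hand side is exactly $Q_\lambda(r)/p^2$, with no rescaling of the variable. So $r^*=x_p(\lambda)$, and the sign pattern of $G'$ follows from Lemma~\ref{lem:profile-parameters}.

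\textbf{The value at the minimizer.} The relation you were unsure about is $w(x)=y-(p-1)x$, which is just $b=(p-1)+a/2$. Write $\tilde y(r)=\alpha/r+a-\frac a2 r$. Then $w(r)=\tilde y(r)-(p-1)r$ and $r\tilde y(r)=\alpha+ar-\frac a2 r^2$. Since $(p-1)(1-a)=1$, expanding $G$ collapses to the identity
\[
G(r)=\frac12\left(r^2+\frac{\tilde y(r)^2}{p-1}\right)\qquad(r>0).
\]
Hence $G(x)=\Phi_p(\lambda)$ immediately. Step~3 needs neither the critical-point equation nor \eqref{eq:edge-density}, and the endpoint numerics and fallback route can be dropped.

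\textbf{A simplification over the paper.} You bound $F_\alpha$ by the global minimum of $G$: directly on $(0,r_0]$, and on $[r_0,1/p]$ via $F_\alpha\ge G(r_0)$. So you never need $x\le\rho$. The paper proves $x\le\rho$ via $S'(\rho)=a(1-p\rho)\ge0$ only to conclude that $\Phi_p(\lambda)$ is actually attained as $\min F_\alpha$.

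Your separate ``minimum at $1/p$'' case is superfluous, because $x\le1/p$ by Lemma~\ref{lem:profile-parameters}. The case $\alpha=0$ never arises, since $m>t_{p-1}(n)$.
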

\begin{proof}
Since $m>t_{p-1}(n)$, we have $\lambda>0$ and $\alpha>0$. Recall that $b=p(2p-3)/[2(p-1)]$.
Let $g(r)=\alpha/r+a-br$. Then $F_\alpha(r)=\alpha+ar-(pa/2)r^2+\frac1{2(p-1)}g(r)_+^2$. Since $g'(r)=-\alpha/r^2-b<0$, $g$ is strictly decreasing. Since $g(1/p)=(\lambda-1)/[2(p-1)]\le0$ and $\lim_{r\to0^+}g(r)=+\infty$, there is a unique $\rho\in(0,1/p]$ with $g(\rho)=0$, and $g(r)\ge0$ for $0<r\le\rho$, $g(r)\le0$ for $\rho\le r\le1/p$.
Therefore 
\begin{align*}
F_\alpha(r)=  \alpha+ar-\frac{pa}{2}r^2+
\begin{cases}
\frac{g(r)^2}{2(p-1)} & \text{for } r \in (0,\rho],\\
0 & \text{for } r \in [\rho,1/p].
\end{cases}
\end{align*}

We now estimate the minimum value of $F_\alpha(r)$ on the interval $(0, 1/p]$.

If $r \in (0,\rho]$, let $S(r):=F_\alpha(r)=\alpha+ar-(pa/2)r^2+\frac1{2(p-1)}g(r)^2$. 
Then $$S'(r)=a-par+\frac1{p-1}(-\alpha/r^2-b)g(r)=\frac{P_\alpha(r)}{4r^3(p-1)^3},$$ 
where $$P_\alpha(r)=pD_pr^4-2(p-2)^2r^3-4(p-1)(p-2)\alpha r-4(p-1)^2\alpha^2.$$ 
After substituting $\alpha=\lambda/[2p(p-1)]$, we deduce that $P_\alpha(r)=Q_{\lambda}(r)/p^2$, where $Q_{\lambda}(X)$ is as defined in the proof of Lemma \ref{lem:profile-parameters}.
As in the proof of Lemma \ref{lem:profile-parameters}, $P_\alpha$ has a unique positive root $x=x_p(\lambda)$ with $x\le1/p$, and $P_\alpha<0$ on $(0,x)$, $P_\alpha>0$ on $(x,\infty)$. 
Hence $S$ decreases on $(0,x)$ and increases on $(x,\infty)$. 
Using $g(\rho)=0$ in $S'(\rho)$ yields $S'(\rho)=a(1-p\rho)\ge0$, so $x\le\rho$. Therefore $\min_{(0,\rho]}S=S(x)$. 
If $r\in [\rho,1/p]$, let $T(r):=F_\alpha(r)=\alpha+ar-(pa/2)r^2$.
Then $T'(r)=a(1-pr)\ge0$, so $\min_{\rho \le r\le1/p} T=T(\rho)$. 
Hence $\min_{0<r\le1/p}F_\alpha(r)=\min \{S(x),T(\rho)\}=S(x)$ by $T(\rho)=S(\rho)$ and $\min_{(0,\rho]}S=S(x)$.

Let $y=\frac{\alpha}{x}+a-\frac a2x$. Since \(b=(p-1)+a/2\), we have
$g(x)=y-(p-1)x$. A direct simplification gives $S(x)=\frac12\left(x_p(\lambda)^2+\frac{y_p(\lambda)^2}{p-1}\right)=\Phi_p(\lambda)$.
 
Hence, for every \(0<r\le1/p\),
$F_\alpha(r)\ge S(x)=\Phi_p(\lambda)$ as desired.
\end{proof}

Note that $1/n\le |\mathcal R|/n\le \lfloor n/p \rfloor/n \le1/p$.
For every admissible graph $G$, Proposition~\ref{prop:clique-lower-function} and Lemma~\ref{lem:optimization} yield $f_{p+1}(G)\ge\Phi_p(\lambda)n^2-O_p(n)$. Minimizing over $G$ proves the lower bound of Theorem \ref{thm:clique-main}.


\section{Proof of Theorem~\ref{thm:triangle-main}}\label{sec:4}

 Let $N_2(v)$ be the set of vertices at distance two from $v$. Let $u(v)$ be the number of unordered pairs of vertices in $N_G(v)$ whose unique common neighbor is $v$. We first prove Theorem~\ref{thm:triangle-main} for the connected case, and then prove Theorem~\ref{thm:triangle-main} completely.

\begin{lemma}\label{lem:triangle-identities}
Let $G$ be triangle-free.
\begin{enumerate}
\item[(1)] If $H=G-v$ for some $v\in V(G)$, then
$f_3(G)-f_3(H)=|N_2(v)|+u(v).$

\item[(2)] Suppose that $G_1$ and $G_2$ are connected subgraphs of $G$,  $G_1\cap G_2=\{x\}$ and $G=G_1\cup G_2$. Then
$f_3(G)=f_3(G_1)+f_3(G_2)+d_{G_1}(x)d_{G_2}(x).
$
\end{enumerate}
\end{lemma}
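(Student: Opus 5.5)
Both parts come from classifying the pairs at distance two. In a triangle-free graph, $f_3$ counts the unordered pairs of vertices at distance exactly two. So throughout I compare the sets of distance-two pairs in the relevant graphs.

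For part (1), split the distance-two pairs of $G$ into two kinds.

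Pairs containing $v$ are exactly the pairs $\{v,w\}$ with $w\in N_2(v)$, so there are $|N_2(v)|$ of them.

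Now take a pair $\{a,b\}$ avoiding $v$. Deleting $v$ does not remove the edge $ab$ if it exists, and it can only destroy common neighbors. So $\{a,b\}$ is at distance two in $G$ but not in $H$ exactly when $ab\notin E(G)$ and $v$ is the only common neighbor of $a$ and $b$ in $G$. In that case $a,b\in N_G(v)$, and since $G$ is triangle-free, $ab$ is automatically a non-edge. These are precisely the pairs counted by $u(v)$.

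Conversely, every pair at distance two in $H$ is still at distance two in $G$. Its common neighbor survives in $G$, and adding back $v$ adds no edge between two vertices of $H$. Summing the two kinds gives $f_3(G)-f_3(H)=|N_2(v)|+u(v)$.

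For part (2), write $V_j=V(G_j)\setminus\{x\}$. Since $G=G_1\cup G_2$ and the two graphs share only $x$, every edge of $G$ lies in $G_1$ or in $G_2$. In particular there are no edges between $V_1$ and $V_2$.

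First consider a pair inside $V(G_1)$. Any common neighbor it has in $G$ lies in $G_1$. A vertex of $V_2$ is adjacent only to vertices of $V(G_2)$, so it cannot be adjacent to a vertex of $V_1$ and thus cannot be a common neighbor of the pair. Adjacency within $V(G_1)$ is the same in $G$ as in $G_1$. So such a pair is at distance two in $G$ if and only if it is at distance two in $G_1$. The same holds symmetrically for pairs inside $V(G_2)$.

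The pairs inside $V(G_1)$ and those inside $V(G_2)$ overlap only in pairs containing both... but no pair lies in both, since the vertex sets share only $x$. So these pairs contribute $f_3(G_1)+f_3(G_2)$.

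It remains to handle pairs $\{a,b\}$ with $a\in V_1$ and $b\in V_2$. These are never adjacent, and their only possible common neighbor is $x$. So the pair is at distance two exactly when $a\in N_{G_1}(x)$ and $b\in N_{G_2}(x)$. There are $d_{G_1}(x)d_{G_2}(x)$ such pairs, which gives the formula.

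Connectivity of $G_1$ and $G_2$ is not actually needed for this count.

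There is no real obstacle here. The only point that needs care is in part (1): the pairs $\{a,b\}$ that lose their distance-two status are exactly those counted by $u(v)$, and triangle-freeness is what guarantees that $ab$ is a non-edge, so $\{a,b\}$ really was at distance two in $G$.
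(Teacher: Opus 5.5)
Your proof is correct and follows the paper's argument. In part (1) you split the pairs that are saturating in $G$ but not in $H$ into those containing $v$ (counted by $|N_2(v)|$) and those inside $N_G(v)$ whose unique common neighbor is $v$ (counted by $u(v)$); in part (2) you classify pairs as lying inside one $V(G_i)$ or crossing, and your extra details (edges with both ends in $V(G_1)$ lie in $G_1$, and connectivity of the $G_i$ is not needed) are accurate and simply make explicit what the paper leaves implicit.
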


\begin{proof}
Since every triangle-saturating non-edge of $H$ is also a  triangle-saturating  non-edge in $G$, we only need to consider the triangle-saturating non-edges in $G$ but not in $H$. A pair counted by $f_3(G)$ but not by $f_3(H)$ either contains $v$, in which case its other endpoint belongs to $N_2(v)$, or has both endpoints in $N_G(v)$ and loses its unique common neighbor when $v$ is deleted. Hence the first statement holds.

For the second statement, a pair contained in $V(G_i)$ is triangle-saturating in $G$ exactly when it is triangle-saturating in $G_i$. A pair with one endpoint in each $G_i$ has a common neighbor exactly when both endpoints are adjacent to $x$. And there are $d_{G_1}(x)d_{G_2}(x)$ such pairs. Combining the above counting, we see the second statement holds.
\end{proof}

For a triangle-free graph $G$ and $v\in V(G)$, by Lemma \ref{lem:triangle-identities}(1), let $\ell(v)=f_3(G)-f_3(G-v)=|N_2(v)|+u(v)$.

\begin{lemma}\label{lem:triangle-removal}
Let $G$ be a $2$-connected triangle-free graph with minimum degree $\delta$. If $G\ne K_{\delta,\delta}$, then there is a vertex $v$ such that $G-v$ is connected and $\ell(v)\ge d_G(v)$.
\end{lemma}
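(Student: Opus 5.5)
Since $G$ is $2$-connected, $G-v$ is connected for \emph{every} vertex $v$. So the connectivity requirement is automatic, and it suffices to find a vertex $v$ with $\ell(v)=|N_2(v)|+u(v)\ge d_G(v)$. The plan is to look only at vertices of minimum degree and to use $|N_2(v)|$ alone, with $u(v)\ge0$. Note that $\delta\ge2$ because $G$ is $2$-connected.

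\textbf{Basic estimate.} Let $v$ be any vertex with $d_G(v)=\delta$, and let $w\in N_G(v)$. Since $G$ is triangle-free, $N_G(w)\setminus\{v\}$ is disjoint from $N_G(v)\cup\{v\}$. Hence $N_G(w)\setminus\{v\}\subseteq N_2(v)$, which gives $|N_2(v)|\ge d_G(w)-1\ge\delta-1$.

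If $|N_2(v)|\ge\delta$, then $\ell(v)\ge\delta=d_G(v)$ and we are done. So assume $|N_2(v)|=\delta-1$. Then equality holds for every $w\in N_G(v)$, so every neighbour $w$ of $v$ has degree exactly $\delta$ and $N_G(w)=\{v\}\cup N_2(v)$.

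\textbf{Structure in the equality case.} Put $A=N_G(v)$ and $B=\{v\}\cup N_2(v)$. Then $|A|=|B|=\delta$, every vertex of $A$ is adjacent to every vertex of $B$, and vertices of $A$ have no other neighbours. Both $A$ and $B$ are independent sets by triangle-freeness, since $A$ and $B$ are nonempty and completely joined.

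If $V(G)=A\cup B$, then $G=K_{\delta,\delta}$, which is excluded. Otherwise, connectivity gives an edge from $A\cup B$ to some vertex $c\notin A\cup B$. This edge cannot start in $A$, because neighbourhoods of $A$-vertices lie in $B$. It cannot start at $v$, because $N_G(v)=A$. So $c$ is adjacent to some $b\in B\setminus\{v\}$.

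\textbf{Switching to a neighbour of $v$.} Now consider any $w\in A$; it has minimum degree $\delta$. Every vertex of $A\setminus\{w\}$ is at distance two from $w$, through any vertex of $B$, which contributes $\delta-1$ vertices to $N_2(w)$. In addition, $c$ is adjacent to $b\in N_G(w)$, and $c$ is neither $w$ nor a neighbour of $w$, since $N_G(w)=B$. Hence $c\in N_2(w)$ as well, and $c\notin A$. Therefore $|N_2(w)|\ge\delta$, so $\ell(w)\ge\delta=d_G(w)$, and $w$ is the required vertex.

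\textbf{Main obstacle.} The only delicate step is the equality analysis: one must check that $|N_2(v)|=\delta-1$ forces the complete bipartite structure between $A$ and $B$. The rest is bookkeeping, and the quantity $u(v)$ is never needed.
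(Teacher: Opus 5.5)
Your proof is correct and follows essentially the same route as the paper. You take a minimum-degree vertex $v$ and use $|N_2(v)|\ge\delta-1$; you show that equality forces a $K_{\delta,\delta}$ on $N_G(v)$ and $\{v\}\cup N_2(v)$; and you then switch to a neighbour of $v$, which gains the outside vertex in its second neighbourhood, with connectivity of the deleted graph coming from $2$-connectivity throughout.
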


\begin{proof}
Choose a vertex $v$ of degree $\delta$. Note that  $G-v$ is connected and  every neighbor of $v$ has at least $\delta-1$ further neighbors, all in $N_2(v)$. Then  $|N_2(v)|\ge\delta-1$. If $|N_2(v)|\ge\delta$, then $\ell(v)\ge\delta=d_G(v)$.

Assume that $|N_2(v)|=\delta-1$. For every $y\in N_G(v)$, the set $N_G(y)\setminus\{v\}$ is contained in $N_2(v)$ and has at least $\delta-1$ vertices. Hence $d_G(y)=\delta$ and $N_G(y)\setminus\{v\}=N_2(v)$. That is, every vertex of $N_G(v)$ is adjacent to every vertex of $N_2(v)$. Indeed, since $G$ is triangle-free,  $N_G(v)$ and $N_2(v)$ are both independent. Hence the subgraph induced by $N_G(v)\cup\{v\}\cup N_2(v)$ is a copy of $K_{\delta,\delta}$, with parts $N_G(v)$ and $\{v\}\cup N_2(v)$.

Since $G\ne K_{\delta,\delta}$ and $G$ is connected, there is an edge $xz$ with $x$ in this complete bipartite subgraph and $z$ outside it. Since no vertex of $N_G(v)\cup \{v\}$ has an outside neighbor,  $x\in N_2(v)$. Choose $y\in N_G(v)$. The $\delta-1$ vertices of $N_G(v)\setminus\{y\}$ are at distance two from $y$ through $v$, and $z$ is at distance two from $y$ through $x$. Thus $|N_2(y)|\ge\delta=d_G(y)$. Since $G$ is $2$-connected, $G-y$ is connected, and $y$ has the required properties.
\end{proof}

\begin{lemma}\label{lem:triangle-small-diameter}
Let $H$ be a connected bipartite graph of diameter at most three, with bipartition $A\cup B$, where $|A|=a$ and $|B|=b$. Then
\begin{equation}\label{eq:diameter-count}
f_3(H)=\binom a2+\binom b2.
\end{equation}
Moreover, for every vertex $x\in V(H)$,  $f_3(H)\ge e(H)-d_H(x)$ and $f_3(H)\ge s(a+b)$.
\end{lemma}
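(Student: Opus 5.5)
Since $H$ is bipartite it is triangle-free, and a non-edge is $K_3$-saturating exactly when its endpoints are at distance two. I will first establish \eqref{eq:diameter-count}, then derive the two inequalities from it.

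\textbf{The identity.} In a connected bipartite graph, two vertices on the same side are at even distance, and two vertices on opposite sides are at odd distance. Take a pair $u,w$ on the same side; it is not an edge. Its distance is even, positive and at most three, so it equals two, and the pair is saturating. Now take a pair on opposite sides. Its distance is odd, so it is either an edge or at distance three. In both cases it is not a saturating non-edge. Counting the same-side pairs gives $f_3(H)=\binom a2+\binom b2$.

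\textbf{The bound $f_3(H)\ge e(H)-d_H(x)$.} I will compare the edges with the same-side pairs. Say $x\in A$ (the case $x\in B$ is symmetric). Every edge of $H$ not incident with $x$ joins a vertex of $A\setminus\{x\}$ to a vertex of $B$. Hence
\[
e(H)-d_H(x)\le (a-1)b .
\]
It therefore suffices to show
\[
(a-1)b\le \binom a2+\binom b2 .
\]
Multiplying by two, this reads $2ab-2b\le a^2-a+b^2-b$, which is equivalent to
\[
(a-b)^2-(a-b)\ge 0,
\]
that is, $(a-b)(a-b-1)\ge0$. This holds for all integers, since no integer lies strictly between $0$ and $1$.

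\textbf{The bound $f_3(H)\ge s(a+b)$.} Put $q=a+b$. The expression $\binom a2+\binom b2$ is convex in $a$ when $a+b$ is fixed, so it is minimized when $a$ and $b$ are as balanced as possible. I will check the two parity cases.

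If $q=2k$, the balanced value is $2\binom k2=k^2-k$. On the other hand, $s(q)=\lfloor(2k-1)^2/4\rfloor=k^2-k$, so the two agree.

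If $q=2k+1$, the balanced value is $\binom k2+\binom{k+1}2=k^2$. On the other hand, $s(q)=\lfloor 4k^2/4\rfloor=k^2$, so again they agree.

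The degenerate case $a=0$ or $b=0$ is also covered. Connectivity then forces a single vertex, where $f_3(H)=0=s(1)$.

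\textbf{Expected obstacle.} The only delicate point is the inequality $(a-1)b\le\binom a2+\binom b2$. It is tight when $b=a$ or $b=a-1$, so the integer argument above is needed; a cruder real-variable bound would not suffice.
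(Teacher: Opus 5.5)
Your proof is correct and follows essentially the same route as the paper. It uses the parity-of-distance argument for the identity, reduces $e(H)-d_H(x)\le(a-1)b$ to $(a-b)(a-b-1)\ge 0$, and uses a balancing argument for the bound $s(a+b)$. The paper phrases that last step as $f_3(H)=(q^2-2q+(a-b)^2)/4$ together with the parity of $a-b$, which is the same observation as your convexity and parity-case check.
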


\begin{proof}
Since any two vertices in the same part have even distance,  and since the  diameter is at most 3, its distance must be two. Also,  vertices in different parts cannot have a common neighbor. These arguments prove \eqref{eq:diameter-count}.

Assume that $x\in A$. Every edge not incident with $x$ joins $A\setminus\{x\}$ to $B$, so $e(H)-d_H(x)\le(a-1)b$. By \eqref{eq:diameter-count}, $f_3(H)-(a-1)b=(a-b)(a-b-1)/2\ge0$ because $a-b$ is an integer. The case $x\in B$ is symmetric.

Finally, if $q=a+b$, then $f_3(H)=(q^2-2q+(a-b)^2)/4$. When $q$ is even, $a-b$ is even; when $q$ is odd, $a-b$ is odd. Hence $(a-b)^2\ge0$ in the first case and $(a-b)^2\ge1$ in the second, which yields $f_3(H)\ge\lfloor(q-1)^2/4\rfloor=s(q)$.
\end{proof}

\begin{lemma}\label{thm:triangle-structural}
Let $G$ be a connected triangle-free graph with $e=e(G)$.
\begin{enumerate}
\item[(1)] If $G$ is not bipartite, then $f_3(G)\ge e$.
\item[(2)] If $G$ is bipartite and $\diam(G)\ge4$, then $f_3(G)\ge e-1$.
\end{enumerate}
\end{lemma}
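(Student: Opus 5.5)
The plan is to prove (1) and (2) together by induction on $|V(G)|$, carrying along the auxiliary inequality
\[
f_3(G)\ge e(G)-d_G(x)\qquad\text{for every connected triangle-free $G$ and every $x\in V(G)$.}\tag{$\ast$}
\]
For a given $G$, $(\ast)$ follows from the three possible cases. If $G$ is non-bipartite, (1) gives $f_3(G)\ge e(G)$. If $G$ is bipartite of diameter at most three, Lemma~\ref{lem:triangle-small-diameter} applies. If $G$ is bipartite of diameter at least four, (2) gives $f_3(G)\ge e(G)-1$, and $d_G(x)\ge1$ unless $G=K_1$. So $(\ast)$ holds for every connected triangle-free graph of order less than $|V(G)|$ once (1) and (2) are known for those orders. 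The base cases $K_1,K_2$ are trivial.

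\emph{Cut-vertex case.} Suppose $G$ has a cut vertex $x$. Write $G=G_1\cup G_2$ with $G_1,G_2$ connected, $G_1\cap G_2=\{x\}$, and $d_i=d_{G_i}(x)\ge1$. Lemma~\ref{lem:triangle-identities}(2) together with $(\ast)$ applied to both pieces gives
\[
f_3(G)\ge e_1-d_1+e_2-d_2+d_1d_2=e-1+(d_1-1)(d_2-1)\ge e-1 .
\]
This settles (2) in this case. If $G$ is non-bipartite, some piece, say $G_1$, is non-bipartite, because odd cycles are blockwise. Using (1) for $G_1$ and $(\ast)$ for $G_2$ gives $f_3(G)\ge e+d_2(d_1-1)\ge e$.

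\emph{$2$-connected case.} Here $G\neq K_{\delta,\delta}$, since that graph is bipartite of diameter $2$ and so lies outside both statements. Lemma~\ref{lem:triangle-removal} then gives a vertex $v$ with $G-v$ connected and $\ell(v)\ge d_G(v)$. By Lemma~\ref{lem:triangle-identities}(1), $f_3(G)=f_3(G-v)+\ell(v)$. If $G-v$ is of the same type as $G$ (non-bipartite in (1), bipartite of diameter $\ge4$ in (2)), induction gives $f_3(G-v)\ge e(G-v)$, respectively $e(G-v)-1$. Adding $\ell(v)\ge d_G(v)=e(G)-e(G-v)$ finishes the argument.

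\emph{Type change.} The main obstacle is when $G-v$ changes type: $G$ is non-bipartite but $G-v$ is bipartite, or $G-v$ has diameter at most three. Then only $(\ast)$ is available for $G-v$, and I would choose the vertex $x$ in $(\ast)$ cleverly.

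First take $x=w\in N_G(v)$. Triangle-freeness forces $N_{G-v}(w)\cap N_G(v)=\emptyset$, so $N_{G-v}(w)\subseteq N_2(v)$. Hence $|N_2(v)|\ge d_{G-v}(w)$, and this absorbs the loss in $(\ast)$.

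The remaining deficit of $d_G(v)$ (or $d_G(v)-1$ in case (2)) has to come from $u(v)$ and from further vertices of $N_2(v)$. In case (1) with $G-v$ bipartite with parts $A,B$, the vertex $v$ has neighbours in both $A$ and $B$. Every pair $\{a,b\}$ with $a\in N(v)\cap A$ and $b\in N(v)\cap B$ is a non-edge whose only common neighbour is $v$, since $a,b$ lie in opposite parts of $G-v$. Thus $u(v)\ge|N(v)\cap A|\,|N(v)\cap B|$. Moreover $N_{G-v}(b)\setminus N(v)\subseteq N_2(v)$ is disjoint from $N_{G-v}(w)$.

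Combining these estimates, and using $2$-connectivity to guarantee extra neighbours when one of the two sides of $N(v)$ has size $1$, should give the required bound. Case (2) is handled similarly, using $u(v)$ and the vertices of $N_2(v)$ not adjacent to the chosen neighbour $w$. If this bookkeeping fails in some subcase, I would instead re-choose $v$ within the proof of Lemma~\ref{lem:triangle-removal} so that $G-v$ keeps the type of $G$.
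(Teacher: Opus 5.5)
Your cut-vertex case and the type-preserving half of the $2$-connected case are correct. In case (1) with $G-v$ bipartite your bookkeeping also closes. Take $w\in N(v)\cap A$ and $b_0\in N(v)\cap B$, with $s=|N(v)\cap A|$ and $t=|N(v)\cap B|$. Then $\ell(v)\ge d_{G-v}(w)+d_{G-v}(b_0)+st\ge d_{G-v}(w)+1+st\ge d_{G-v}(w)+s+t$, since $(s-1)(t-1)\ge0$.

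\textbf{The gap is case (2) when $G-v$ has diameter at most three.} There $S=N_G(v)$ lies in one part $A$ of $G-v$, and any two vertices of $S$ already have a common neighbour in $G-v$. Hence $u(v)=0$ and $\ell(v)=|R|$ with $R=N_{G-v}(S)$. To close via $(\ast)$ you would need $|R|\ge d_{G-v}(x)+d-1$ for some $x$. This fails whenever the vertices of $S$ share their neighbourhood, because then nothing outside $N_{G-v}(w)$ covers the deficit $d-1\ge1$.

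Here is a concrete counterexample. Let $v$ be adjacent to $s_1,s_2$, let $s_1,s_2,c_1,c_2$ all be adjacent to $r_1,r_2$, and let $c_1,c_2$ be adjacent to $z$. This $G$ is $2$-connected and bipartite, with $e=12$ and $\diam(G)=4$ (attained by $v,z$). The vertex $v$ has minimum degree $2=|N_2(v)|$, and $z$ is symmetric to $v$, so Lemma~\ref{lem:triangle-removal} returns $v$ or its mirror. Then $G-v$ has diameter $3$, $10$ edges and minimum degree $2$. So $(\ast)$ gives at best $f_3(G-v)\ge8$, hence $f_3(G)\ge 8+\ell(v)=10<11=e-1$, even though the true value is $f_3(G)=9+2=11$. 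Your fallback of re-choosing $v$ so that $G-v$ keeps diameter at least four is not backed by any argument; it would need a new existence lemma that you do not supply.

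\textbf{What is missing} is that in this subcase $(\ast)$ is too lossy. You must use both the exact value $f_3(G-v)=\binom{a}{2}+\binom{b}{2}$ from Lemma~\ref{lem:triangle-small-diameter} and the hypothesis $\diam(G)\ge4$; here $a=|A|$ and $b=|B|$ are the part sizes of $G-v$. The paper observes that $\diam(G)\ge4$ forces $B\setminus R\neq\emptyset$. Otherwise every vertex is within distance three of $v$, and together with $\diam(G-v)\le3$ this gives $\diam(G)\le3$. So the $d(b-r)$ pairs between $S$ and $B\setminus R$ are non-edges, where $r=|R|$, and $e(G-v)\le ab-d(b-r)$. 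Combined with $\ell(v)=r$, this yields
\[
f_3(G)-(e-1)\ge\frac{(a-b)(a-b-1)}{2}+(d-1)(b-r-1)\ge0,
\]
using $d\ge2$ from $2$-connectivity and $b-r\ge1$.
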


\begin{proof}
We use  induction on $|V(G)|$. Obviously, both assertions are vacuous when $|V(G)|\le4$. If $|V(G)|=5$, it is easy to check that $G$ is a 5-cycle when $G$ is not bipartite and (1) is true, or $G$ is a path of length 4 when $G$ is bipartite and $diam(G)\ge 4$, and (2) is true. Assume that $|V(G)|\ge6$ and that both statements hold for every smaller connected triangle-free graph.

Suppose first that $G$ has a cut vertex $x$. Divide the components of $G-x$ into two nonempty families, and let $G_1$ and $G_2$ be the subgraphs induced by $x$ and the vertices in the two families, respectively. Let $e_i=e(G_i)$ and $d_i=d_{G_i}(x)$. Then both $d_1$ and $d_2$ are positive.

If $G$ is not bipartite, then at least one of $G_1,G_2$ is not bipartite. Assume that $G_1$ is not bipartite. The induction hypothesis yields $f_3(G_1)\ge e_1$. If $G_2$ is not bipartite, then $f_3(G_2)\ge e_2$, and Lemma \ref{lem:triangle-identities}(2) yields $f_3(G)\ge e$. If $G_2$ is bipartite with diameter at least four, then $f_3(G_2)\ge e_2-1$, and $d_1d_2\ge1$ again yields $f_3(G)\ge e$. If $G_2$ is bipartite with diameter at most three, Lemma~\ref{lem:triangle-small-diameter} yields $f_3(G_2)\ge e_2-d_2$, so again by Lemma \ref{lem:triangle-identities}(2), $f_3(G)\ge e_1+e_2-d_2+d_1d_2=e+d_2(d_1-1)\ge e$.

Assume next that $G$ is bipartite and $\diam(G)\ge4$. If both $G_1$ and $G_2$ have diameter at least four, then $f_3(G)\ge(e_1-1)+(e_2-1)+d_1d_2\ge e-1$. If exactly one of $G_1,G_2$, say $G_1$, has diameter at least four, then $G_2$ has diameter at most 3. By Lemma \ref{lem:triangle-small-diameter},   $f_3(G)\ge(e_1-1)+(e_2-d_2)+d_1d_2=e-1+d_2(d_1-1)\ge e-1$. If both $G_1$ and $G_2$ have diameter at most three, then by Lemma \ref{lem:triangle-small-diameter}, $f_3(G)\ge(e_1-d_1)+(e_2-d_2)+d_1d_2=e-1+(d_1-1)(d_2-1)\ge e-1$. 
\vskip 2mm
It remains to consider that $G$ is  $2$-connected. Let $\delta$ be its minimum degree. If $G=K_{\delta,\delta}$, then $G$ is bipartite of diameter two, so neither statement applies. Otherwise, Lemma~\ref{lem:triangle-removal} supplies a vertex $v$ such that $H=G-v$ is connected and $\ell(v)\ge d_G(v)$. Let  $d=d_G(v)$ and $e_H=e(H)$, so $e=e_H+d$.

Suppose that $G$ is not bipartite. If $H$ is not bipartite, the induction hypothesis  yields $f_3(G)=f_3(H)+\ell(v)\ge e_H+d=e$. Assume that $H$ is bipartite, with bipartition $A\cup B$. Since the bipartition does not extend to $G$, the vertex $v$ has neighbors in both parts. Let  $S_A=N_G(v)\cap A$, $S_B=N_G(v)\cap B$, $s=|S_A|$, and $t=|S_B|$. Thus $s,t\ge1$ and $d=s+t$.

Since $G$ is 2-connected and  triangle-free,
the sets $S_A$, $S_B$, $\bigcup_{a\in S_A}N_H(a)\subseteq B$ and $\bigcup_{b\in S_B}N_H(b)\subseteq A$ are disjoint and  nonempty.   Hence $\bigcup_{a\in S_A}N_H(a)\cup \bigcup_{b\in S_B}N_H(b) \subseteq N_2(v)$. Moreover, any pair with one endpoint in $S_A$ and the other in $S_B$ has no common neighbor in the bipartite graph $H$, so $v$ is its only common neighbor in $G$. Therefore $\ell(v)\ge \left|\bigcup_{a\in S_A}N_H(a)\right|+\left|\bigcup_{b\in S_B}N_H(b)\right|+st$.

If $\diam(H)\ge4$, the induction hypothesis  yields $f_3(H)\ge e_H-1$. Since the above four sets  are nonempty, $\ell(v)\ge2+st\ge s+t+1=d+1$, where the second inequality follows from $2+st-(s+t+1)=(s-1)(t-1)\ge0$. Hence $f_3(G)\ge(e_H-1)+(d+1)=e$.

Assume that $\diam(H)\le3$. By \eqref{eq:diameter-count}, $f_3(H)=\binom{|A|}{2}+\binom{|B|}{2}$. Let $\varepsilon=e_H-f_3(H)$. If $\varepsilon\le0$, then $f_3(G)=f_3(H)+\ell(v)\ge e_H+d= e$. Suppose that $\varepsilon\ge1$. Lemma~\ref{lem:triangle-small-diameter} implies $d_H(x)\ge\varepsilon$ for every $x\in V(H)$. Choose $a_0\in S_A$ and $b_0\in S_B$. Their neighborhoods lie in different parts, so $\left|\bigcup_{a\in S_A}N_H(a)\right|+\left|\bigcup_{b\in S_B}N_H(b)\right|\ge d_H(a_0)+d_H(b_0)\ge2\varepsilon$. Thus $\ell(v)\ge2\varepsilon+st\ge s+t+\varepsilon=d+\varepsilon$, because $2\varepsilon+st-(s+t+\varepsilon)=(\varepsilon-1)+(s-1)(t-1)\ge0$. Therefore $f_3(G)\ge(e_H-\varepsilon)+(d+\varepsilon)=e$.

Finally, assume that $G$ is bipartite and $\diam(G)\ge4$. Then all neighbors of $v$ lie in one part of a bipartition of $H$. If $\diam(H)\ge4$, the induction hypothesis yields $f_3(G)\ge(e_H-1)+d=e-1$. Assume that $\diam(H)\le3$, and choose a bipartition $A\cup B$ with $S=N_G(v)\subseteq A$. Let $a=|A|$, $b=|B|$, $|S|=d$, and $R=\bigcup_{u\in S}N_H(u)\subseteq B$ with $r=|R|$.

Since $diam(H)\le 3$, every two vertices of $S$ already have a common neighbor in $H$. So deleting $v$ destroys no triangle-saturating non-edge with both endpoints in $S$. The vertices at distance two from $v$ are exactly those in $R$, and hence $\ell(v)=r$ by the definition of $\ell(v)$.

We claim that  $B\setminus R$ is nonempty. Otherwise, every vertex of $B$ is at distance two from $v$, every vertex of $S$ is adjacent to $v$, and every vertex of $A\setminus S$ is at distance at most three from $v$, since it has distance two in $H$ from any vertex of $S$. Together with $\diam(H)\le3$, this would imply $\diam(G)\le3$, a contradiction.

Since all $d(b-r)$ possible edges between $S$ and $B\setminus R$ are absent,  $e_H\le ab-d(b-r)$. 
Since \(H\) is connected, bipartite, and has diameter at most three,
Lemma~\ref{lem:triangle-small-diameter} gives
\[
f_3(H)=\binom{a}{2}+\binom{b}{2}.
\]
Using \(e=e_H+d\) and \(\ell(v)=r\), we obtain
\begin{align*}
f_3(G)-(e-1)
&=f_3(H)+r-e_H-d+1\\
&\ge
\binom{a}{2}+\binom{b}{2}
+r-ab+d(b-r)-d+1\\
&=
\frac{(a-b)(a-b-1)}{2}
+(d-1)(b-r-1).
\end{align*}
The first term is nonnegative because \(a-b\) is an integer. The second term is also
nonnegative, since \(G\) is \(2\)-connected and hence \(d\ge 2\), and
we have already proved that \(b-r\ge 1\). Consequently,
\( f_3(G)\ge e-1.
\)
This completes the proof. 
\end{proof}

Now we are ready to prove Theorem~\ref{thm:triangle-main} for the connected case.

\begin{theorem}
\label{prop:triangle-connected}
If $G$ is a connected triangle-free graph with $q\ge2$ vertices and $e$ edges, then $f_3(G)\ge\min\{e-1,s(q)\}$.
\end{theorem}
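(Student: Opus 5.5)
This follows from Lemma~\ref{thm:triangle-structural} and Lemma~\ref{lem:triangle-small-diameter} by a case split on whether $G$ is bipartite and on its diameter. Let $G$ be connected and triangle-free with $q\ge2$ vertices and $e$ edges. There are three cases.

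First, suppose $G$ is not bipartite. Then Lemma~\ref{thm:triangle-structural}(1) gives $f_3(G)\ge e\ge e-1$, so $f_3(G)\ge\min\{e-1,s(q)\}$.

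Second, suppose $G$ is bipartite and $\diam(G)\ge4$. Then Lemma~\ref{thm:triangle-structural}(2) gives $f_3(G)\ge e-1$ directly.

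Third, suppose $G$ is bipartite and $\diam(G)\le3$. Since $G$ is connected it is a connected bipartite graph of diameter at most three, and its bipartition $A\cup B$ satisfies $|A|+|B|=q$. The final assertion of Lemma~\ref{lem:triangle-small-diameter} then yields $f_3(G)\ge s(|A|+|B|)=s(q)$.

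In every case one of the two quantities in the minimum is a lower bound, which proves the theorem. The only detail to check is that the earlier lemmas apply for small $q$. When $q=2$, $G$ is a single edge, which is bipartite of diameter one, so the third case applies and gives $f_3(G)\ge s(2)=0$; this holds trivially. Lemma~\ref{thm:triangle-structural} was proved by induction for all connected triangle-free graphs, with vacuous base cases, so there is no size restriction on its use. I do not expect any real obstacle: all the substantive work is in Lemma~\ref{thm:triangle-structural}. The point to state explicitly is that these three cases exhaust all connected triangle-free graphs, which they do since $G$ is either non-bipartite or bipartite, and a bipartite $G$ has diameter either at least four or at most three.
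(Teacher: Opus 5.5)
Your proof is correct and takes essentially the same approach as the paper. It uses the same exhaustive case split: Lemma~\ref{thm:triangle-structural} handles non-bipartite $G$ and bipartite $G$ with $\diam(G)\ge4$, giving $f_3(G)\ge e-1$, and the last assertion of Lemma~\ref{lem:triangle-small-diameter} handles bipartite $G$ with $\diam(G)\le3$, giving $f_3(G)\ge s(q)$.
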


\begin{proof}
If $G$ is not bipartite or $G$ is bipartite with diameter at least four, Lemma~\ref{thm:triangle-structural} yields $f_3(G)\ge e-1$. If $G$ is bipartite with diameter at most three, Lemma~\ref{lem:triangle-small-diameter} yields $f_3(G)\ge s(q)$. 
\end{proof}

Note that
\begin{equation}\label{eq:shift}
s(r+1)=t(r)
\end{equation}
holds for every positive integer $r$.

\begin{lemma}\label{lem:compression}
Let $q\ge2$ and $1\le e\le t(q)$. There is an integer $r$ with $1\le r\le q$ such that $t(r)\ge e$ and $s(r)\le\min\{e-1,s(q)\}$.
\end{lemma}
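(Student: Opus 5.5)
We split according to which of $e-1$ and $s(q)$ is smaller. The tool throughout is the identity \eqref{eq:shift}, $s(r+1)=t(r)$, together with the monotonicity of $t$ and $s$ in their argument.

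\emph{Case 1: $e-1\ge s(q)$.} We take $r=q$. Then $t(r)=t(q)\ge e$ holds by hypothesis, and $s(r)=s(q)=\min\{e-1,s(q)\}$, so nothing more is needed.

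\emph{Case 2: $e-1<s(q)$.} Here we choose $r$ to be the least integer with $1\le r\le q$ and $t(r)\ge e$. Such an integer exists because $t(q)\ge e$, and since $e\ge1=t(2)>t(1)=0$ we have $r\ge2$. By minimality, $t(r-1)\le e-1$. Applying \eqref{eq:shift} with $r-1\ge1$ gives
\[
s(r)=t(r-1)\le e-1 .
\]
It remains to show $s(r)\le s(q)$. This follows from $r\le q$ and the monotonicity of $s$, and in this case $\min\{e-1,s(q)\}=e-1$ anyway.

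The choice of $r$ in Case 2 gives exactly the required bound $s(r)\le e-1$. The only point needing care is that $r\ge2$, which is what makes \eqref{eq:shift} applicable to $r-1$; this is guaranteed by $e\ge1$.
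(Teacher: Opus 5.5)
Your proof is correct and follows essentially the paper's route. Both arguments take $r=q$ when $s(q)\le e-1$, and otherwise pick the threshold integer via the identity $s(r+1)=t(r)$: the paper chooses $r$ maximal with $r<q$ and $s(r)\le e-1$ and deduces $t(r)\ge e$, whereas you choose $r$ minimal with $t(r)\ge e$ and deduce $s(r)\le e-1$, and by monotonicity these are the same integer. Your choice, combined with $s(r)\le s(q)$, would in fact make the case split unnecessary.
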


\begin{proof}
If $s(q)\le e-1$, choose $r=q$. Assume that $e-1<s(q)$. Choose $r$ maximal under the conditions $1\le r<q$ and $s(r)\le e-1$. Such an integer exists because $s(1)=0$. Maximality and $s(q)>e-1$ imply $s(r+1)\ge e$. Hence \eqref{eq:shift} yields $t(r)\ge e$, while $s(r)\le e-1<s(q)$.
\end{proof}

\begin{proof}[\bfseries{Proof of Theorem~\ref{thm:triangle-main}}]
Let $F(n,m)$ denote the right-hand side of \eqref{eq:triangle-formula}. Choose a partition $q_1+\cdots+q_k=n$ attaining $F(n,m)$, and let $H$ be the disjoint union of $T_2(q_1),\ldots,T_2(q_k)$. Then $e(H)=\sum_i t(q_i)\ge m$ and $f_3(H)=\sum_i s(q_i)=F(n,m)$. Delete edges until exactly $m$ edges  remain, and denote the resulting graph by $G$. Since every triangle-saturating non-edge of $G$ is also triangle-saturating in $H$, $f_3(n,m)\le f_3(G)\le F(n,m)$.

For the reverse inequality, let $G$ be a triangle-free graph with $n$ vertices and $m$ edges, and $f_3(G)=f_3(n,m)$. Let $G_1,\ldots,G_\ell$ be its components containing at least one edge. Let $q_j=|V(G_j)|$ and $e_j=e(G_j)$. Since saturating pairs do not join different components, we get  $f_3(G)=\sum_j f_3(G_j)$. By Theorem~\ref{prop:triangle-connected},  $f_3(G_j)\ge\min\{e_j-1,s(q_j)\}$.

By Mantel's theorem, $e_j\le t(q_j)$. Lemma~\ref{lem:compression} supplies an integer $r_j\le q_j$ with $t(r_j)\ge e_j$ and $s(r_j)\le\min\{e_j-1,s(q_j)\}\le f_3(G_j)$. Note that $n\ge q_1+\cdots +q_{\ell}$. Replace each part of  $q_j$ by one part of size $r_j$ and $q_j-r_j$ parts of size one. Together with each  part of size one for every isolated vertex of $G$, these parts form a partition of $n$, their total $t$-value is at least $\sum_j e_j=m$, and their total $s$-value is at most $\sum_j f_3(G_j)=f_3(G)$. Hence $F(n,m)\le f_3(G)= f_3(n,m)$, and the theorem follows.
\end{proof}

\section{Concluding remarks}

For $p\ge3$, the upper-bound construction is a blow-up of the graph underlying the constructions in~\cite{BaloghLiu,HMMY}. The proof establishes its asymptotic optimality, but it does not characterize all extremal graphs. Two natural next questions are to obtain a stability theorem for near-extremal graphs and to determine the linear term uniformly throughout the interval.

For $p=2$, Theorem~\ref{thm:triangle-main} gives an exact finite
optimization formula, and Theorem~\ref{cor:triangle-dp} provides a
dynamic program for its evaluation. It remains natural to ask
whether this optimization admits a simpler closed form on substantial
subintervals of $0\le m\le t_2(n)$.

\section*{Acknowledgements}

Jiabao Yang and Ruilin Zheng are supported by the National Key R\&D Program of China under Grant No.~2024YFA1013900, by the National Natural Science Foundation of China under Grant No.~12471327, and, for Yang, additionally by the China Postdoctoral Science Foundation under Grant No.~2026M793375. Xiaolin Wang is supported by the National Natural Science Foundation of China under Grant No.~12401447.

\section*{Declaration on the use of AI}
During the preparation of this work, the authors used OpenAI's ChatGPT (GPT-5.6 Pro) to assist with language editing, organization, and preliminary checks of algebraic and logical consistency. After using this tool, the authors reviewed and edited the content as needed, independently verified all mathematical arguments, and take full responsibility for the content of the article.

\end{document}